\newtheorem{definition}{Definition}
\newtheorem{theorem}[definition]{Theorem}
\newtheorem{lemma}[definition]{Lemma}
\newtheorem{corollary}[definition]{Corollary}
\newtheorem{problem}{Problem}
\newtheorem*{claim*}{Claim}
\newcommand{\0}{\emptyset}
\newcommand{\mc}{\mathcal}
\newcommand{\mbb}{\mathbb}
\newcommand{\foralmostall}{\forall^\infty}
\newcommand{\existinfty}{\exists^\infty}
\newcommand{\Ee}{\mc{E}}
\newcommand{\Ii}{\mc{I}}
\newcommand{\Mm}{\mc{M}}
\newcommand{\Nn}{\mc{N}}
\newcommand{\bez}{\backslash}
\newcommand{\se}{\subseteq}
\newcommand{\es}{\supseteq}
\newcommand{\rest}{\hspace{-0.25em}\upharpoonright\hspace{-0.25em}}
\newcommand{\concat}{^{\frown}}
\newcommand{\cons}{^{\frown}}
\newcommand{\tn}[1]{\textnormal{#1}}
\newcommand{\ti}[1]{\textit{#1}}
\newcommand{\dom}{\textnormal{dom}}
\newcommand{\splitt}{\textnormal{split}}
\newcommand{\succe}{\textnormal{succ}}
\def\c{\mathfrak{c}}
\def\w{\omega}
\def\iff{\longleftrightarrow}
\title{On algebraic sums, trees and ideals in the Cantor space}
\author{Marcin Michalski}
\email{marcin.k.michalski@pwr.edu.pl}
\author{Robert Rałowski}
\email{robert.ralowski@pwr.edu.pl}
\author{Szymon Żeberski}
\email{szymon.zeberski@pwr.edu.pl}
\thanks{The work has been partially financed by grant {\bf 8211204601, MPK: 9130730000} from the Faculty of Pure and Applied Mathematics, Wrocław University of Science and Technology.
	\\
	AMS Classification: Primary: 03E75, 28A05; Secondary: 03E05, 54H05.
	\\
	Keywords: algebraic sum, Cantor space, ideal, perfect set, perfect tree, uniformly perfect tree, Silver tree, splitting tree, null set, meager set, closed null set}
\address{Marcin Michalski, Robert Rałowski, Szymon Żeberski, Faculty of Pure and Applied Mathematics, Wrocław University of Science and Technology, 50-370 Wrocław, Poland}
\date{}
\begin{document}

\begin{abstract}
	We work in the Cantor space $2^\w$. The results of the paper adhere the following pattern. Let $\Ii\in \{\Mm, \Nn, \Mm\cap \Nn, \Ee\}$ and $T$ be a perfect, uniformly perfect or Silver tree. Then for every $A\in \Ii$ there exists $T'\se T$ of the same kind as $T$ such that $A+\underbrace{[T']+[T']+\dots +[T']}_{\text{n--times}}\in \Ii$ for each $n\in\w$. We also prove weaker statements for splitting trees. For the case $\Ee$ we also provide a simple characterization of basis of $\Ee$. We use these results to prove that the algebraic sum of a generalized Luzin set and a generalized Sierpiński set belongs to $u_0$ and $v_0$, provided that $\c$ is a regular cardinal.
\end{abstract}

\maketitle

\section{Introduction and notation}
 
  We adopt the standard set-theoretical notation (see e.g. \cite{Jech}). Throughout the paper $\Mm$ will denote the $\sigma-$ideal of meager subsets of the Cantor space $2^\w$, $\Nn$ will denote the $\sigma-$ideal of null subsets of $2^\w$ and $\Ee$ will denote the $\sigma-$ideal of sets generated by closed null subsets of $2^\w$.
  
  Let $T\se 2^{<\omega}$ be a tree. We will use the following notions related to trees:
  \begin{itemize}
	  \item $\succe_T(\sigma)=\{i\in 2: \sigma\concat i\in T\}$;
  	\item $\splitt(T)=\{\sigma\in T: |\succe_T(\sigma)|= 2\}$.
  \end{itemize}
  
  Let us recall definitions of some families of trees.
  \begin{definition}
    A tree $T\se 2^{<\w}$ is called
    \begin{itemize}
      \item a Sacks or perfect tree, if $(\forall \sigma\in T) (\exists \tau\in T) (\sigma\se\tau \land \tau\in \splitt(T))$;
      \item a uniformly perfect, if for every $n\in\omega$ either $2^n\cap T\se\splitt(T)$ or $2^n\cap \splitt(T)=\0$;
      \item\label{def Silver} a Silver tree if it is perfect and
      \[
      (\forall \sigma, \tau \in T)(|\sigma|=|\tau| \to (\forall i\in\{0,1\})(\sigma\cons i \in T \iff \tau\cons i \in T));
      \]
	    \item a splitting tree, if
	    \[
	      (\forall \sigma\in T) (\exists N\in\w) (\forall n>N) (\forall i\in 2) (\exists \tau\in T) (\sigma\se \tau \land \tau(n)=i).
	    \] 
    \end{itemize}
  \end{definition}

  Notice that for a Silver tree $T$ there exist $x_T\in 2^\omega$ and $A\in[\omega]^\omega$ such that
	\[
	  (\forall \sigma\in 2^{<\omega})(\sigma \in T \iff (\forall n\in\dom(\sigma)\bez A)(\sigma(n)=x_T(n)).
	\]
	In such a case we will call $A$ and $x_T$ the parameters of $T$.
  
  $[T]$ will denote the body of $T$ - the set of infinite branches of $T$, i.e.
  \[
    [T]=\{x\in 2^\w:\; (\forall n\in\w)(x\rest n \in T)\}.
  \]

  Let $+$ be a coordinate wise addition modulo $2$ on $2^\w$. For sets $A, B\se 2^\w$ the algebraic sum of these sets is defined as follows
  \[
    A+B=\{a+b\in 2^\w:\;a\in A \land b\in B\}.
  \]
  We will extend the same notation to express and $x+y$ and $C+x$ for and $x\in 2^X, y\in 2^Y$ and $C\se 2^Z$ where $X,Y, Z\se \w$ and $C\se 2^Z$ in a natural way, i.e.
  \begin{align*}
    x+y&=x\rest (X\bez Y)\cup \{(n,x(n)+y(n)):\; n\in X\cap Y\}\cup y\rest (Y\bez X),
    \\
    C+x&=\{c+x: c\in C\}.
  \end{align*}
   
  Occasionally we will use the following notation for the sequence of zeros: $\mbb{0}=(0, 0, \dots)$; and the sequence of ones: $\mbb{1}=(1, 1, \dots)$.
	
	Notice that for Silver trees $T_1$ and $T_2$ with associated parameters $A_1, x_{T_1}$ and $A_2, x_{T_2}$ respectively, the set $[T_1]+[T_2]$ is a body of a Silver tree with parameters $A_1\cup A_2$ and $x_{T_1}+x_{T_2}$. In particular if $T$ is a Silver tree with parameters $x_T$ and $A$, then $[T]+[T]$ has parameters $\mbb{0}$, $A$. It follows that for any $n\in\omega$
	\[
		\underbrace{[T]+[T]+...+[T]}_{n\ti{--times}}\se [T]\cup ([T]+[T]).
	 \]
	This fact is the reason why the proofs of Theorems \ref{meager Silver}, \ref{Null Silver}, \ref{Silver E} are the least difficult compared to analogous results concerning other types of trees.
	  
	Let $\mbb{T}$ be a family of trees. For such families we define the tree ideal $t_0$ as follows 
	\[
	  A\in t_0 \iff (\forall T\in \mbb{T})(\exists T'\se T, T'\in\mbb{T})([T']\cap A=\0).
	\]
  
  Examples of such tree ideals include $s_0$, $u_0$ and $v_0$ for the family of Sacks, uniformly perfect and Silver trees respectively. The $\sigma$-ideal $s_0$ was the first of its kind to be studied (within the context of the real line) and is also known as the Marczewski ideal. 
  
  Let us recall the notion of $\Ii-$Luzin sets.
  \begin{definition}
    Let $\Ii$ be a $\sigma-$ideal. We call a set $L$ an $\Ii-$Luzin set, if $|L\cap A|<|L|$ for every $A\in \Ii$.
  \end{definition}
  $\Mm-$Luzin sets are called generalized Luzin sets and $\Nn-$Luzin sets are called generalized Sierpiński sets. Regarding the notions of $\Ii-$Luzin sets, tree ideals and algebraic sums, in \cite[Theorem 2.12]{MiZeb} and \cite[Theorem 27]{MiRalZebNon} the authors proved some results that can be combined in the following theorem.
  \begin{theorem}\label{Luziny t0}
    Let $\c$ be a regular cardinal. Then for every generalized Luzin set $L$ and generalized Sierpiński set $S$ we have $L+S\in s_0\cap m_0\cap l_0$.
  \end{theorem}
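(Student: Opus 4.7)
The three claims are parallel; I sketch the argument for $L+S\in s_0$, with the cases $m_0$ and $l_0$ following by the same template inside their respective tree families once the avoidance lemma below is re-established there. Fix a perfect tree $T$; the aim is to produce a perfect subtree $T'\se T$ with $[T']\cap(L+S)=\emptyset$, equivalently with $(l+[T'])\cap S=\emptyset$ for every $l\in L$.

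The engine of the proof is the following avoidance principle, which I would prove first: \emph{for every perfect tree $T$ and every generalized Sierpi\'nski set $S'\se 2^\omega$ there is a perfect subtree $T_1\se T$ with $[T_1]\cap S'=\emptyset$.} To establish it, first thin $T$ to a perfect subtree $T_0\se T$ whose body is closed and of Lebesgue measure zero, by arranging the splitting levels of $T_0$ to be sufficiently sparse. Since $S'$ is generalized Sierpi\'nski and $[T_0]$ is null, $|S'\cap[T_0]|<\c$. As any perfect subset of $2^\omega$ decomposes into $\c$-many pairwise disjoint perfect subsets, regularity of $\c$ yields one such piece disjoint from the $<\c$-sized set $S'\cap[T_0]$, and this piece is the body of the desired $T_1\se T_0$.

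With the avoidance principle in hand, enumerate $L=\{l_\alpha:\alpha<\c\}$ and build a descending sequence of perfect subtrees $T\supseteq T_0\supseteq\ldots\supseteq T_\alpha\supseteq\ldots$ of length $\c$. At a successor stage $\alpha+1$, apply the avoidance principle to $T_\alpha$ and to the (again generalized Sierpi\'nski) translate $l_\alpha+S$, obtaining $T_{\alpha+1}\se T_\alpha$ with $[T_{\alpha+1}]\cap(l_\alpha+S)=\emptyset$. At a limit stage $\lambda<\c$ pass to a fusion limit $T_\lambda$ of the earlier trees. The final fusion limit $T'$ at $\c$ is then a perfect subtree of $T$ with $[T']\cap(l_\alpha+S)=\emptyset$ for every $\alpha<\c$, hence $[T']\cap(L+S)=\emptyset$.

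The main technical obstacle is precisely the limit step: preserving the perfect (and, for $m_0,l_0$, the corresponding Miller/Laver-like) tree structure through $\c$-many successive intersections, since naive transfinite intersection destroys splitting. The standard remedy, used in the cited papers and familiar from iterated Sacks-style arguments, is a bookkeeping where each $T_\alpha$ carries an $\omega$-indexed splitting commitment and stage $\alpha$ is only allowed to modify $T_\alpha$ above a prescribed depth $n(\alpha)\in\omega$, with the schedule $n:\c\to\omega$ arranged so that the resulting countable commitments converge to a genuine perfect tree; regularity of $\c$ is essential here both for the limit construction and for the earlier application of the avoidance principle. For $m_0$ and $l_0$ the null-body step of the avoidance principle is replaced by the small-body construction natural to that family, and the rest of the argument is parallel.
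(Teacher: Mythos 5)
Your avoidance principle is correct as stated: if you thin $T$ to a perfect $T_0\se T$ with $[T_0]$ null, then $|S'\cap[T_0]|<\c$ for a generalized Sierpi\'nski $S'$, and a decomposition of $[T_0]$ into $\c$ pairwise disjoint perfect pieces together with regularity of $\c$ yields a perfect $T_1\se T_0$ missing $S'$. The problem is everything after that.

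The transfinite iteration you propose cannot be carried out. You want a decreasing $\c$-sequence of perfect trees $T\supseteq T_0\supseteq\ldots\supseteq T_\alpha\supseteq\ldots$, with a ``fusion'' at limits governed by a schedule $n:\c\to\omega$ such that stage $\alpha$ only modifies the tree above level $n(\alpha)$, and with the schedule arranged so that commitments converge to a genuine perfect tree. But for the initial segments of the tree to eventually stabilize at every level $k$, the set $\{\alpha<\c: n(\alpha)<k\}$ would have to be bounded in $\c$ for each $k\in\w$, and then $\c=\bigcup_{k\in\omega}\{\alpha:n(\alpha)<k\}$ would be a countable union of bounded subsets of $\c$, contradicting $\cf(\c)>\w$. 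So the ``standard remedy'' of Sacks-style fusion is only an $\w$-length device; there is no $\c$-length analogue. More fundamentally, iterating the avoidance principle $\c$-many times removes up to $\c$-many points from $[T_0]$, and nothing in your argument prevents $[T_0]\cap(L+S)$ from having size $\c$, in which case no perfect subtree can survive. Merely knowing that each $l_\alpha+S$ meets $[T_0]$ in a set of size $<\c$ is not enough.

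This is exactly where the algebraic sum results (which the paper flags as the key ingredient of the cited proofs, and which this paper generalizes) come in and which your argument ignores. The actual proof does not run a $\c$-length induction on perfect trees: it first uses the lemma ``for meager (resp.\ null) $F$ and perfect $T$ there is a perfect $T'\se T$ with $F+[T']$ meager (resp.\ null)'' to thin $T$ once, so that not just $[T']$ but also the algebraic sum $[T']+[T']$ lies in $\Mm\cap\Nn$. That extra structural information --- not available from your null-body thinning alone --- is what lets one bound $[T']\cap(L+S)$ by a set of size $<\c$ and then finish with a \emph{single} application of the disjoint-perfect-pieces argument. Without it, the iteration you sketch has a genuine and unfillable gap at (and before) the limit stages.
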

  Here $m_0$, $l_0$ are tree ideals connected with the families of Miller trees and Laver trees. The key ingredients of the proofs of these theorems were results concerned with algebraic sums of perfect sets with sets from $\sigma-$ideals of meager sets or null sets. In this paper, while we generalize these results, we also obtain the analogue of the above theorem for the case of $u_0$ and $v_0$ in $2^\w$. For the similar purpose algebraic sums were studied in \cite{NoScheeWeiss} and \cite{Scheepers}. In various contexts they were also studied in \cite{ErdKuMa}. Results related to the ones presented in this paper were also helpful in \cite{Rec}.
	
	\section{Trees and algebraic sums}

	\subsection*{Category case}

	  Let us recall following characterization of meager sets in $2^\omega$ from \cite[Theorem 2.2.4]{BarJu}.
		\begin{lemma}\label{baza meagery}
			Let $F$ be a meager subset of $2^\omega$. There is $x_F\in 2^\omega$ and a partition $\{I_n: n\in\omega\}$ of $\omega$ into intervals such that
			\[
				F\se \{x\in 2^\omega: (\foralmostall n)(x\rest I_n \neq x_F\rest I_n)\}.
			\]
		\end{lemma}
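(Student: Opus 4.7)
My plan is to use the standard decomposition of $F$ as a countable union of closed nowhere dense sets and then inductively carve out intervals and values of $x_F$ that avoid each of these sets in turn. Write $F \subseteq \bigcup_{n\in\omega} F_n$, where we may assume each $F_n$ is closed nowhere dense and the sequence is increasing. The goal is to build $0 = k_0 < k_1 < k_2 < \dots$ and a sequence $x_F \in 2^\omega$ so that, setting $I_n = [k_n, k_{n+1})$, we have
\[
[s \concat x_F\restricted I_n] \cap F_n = \emptyset \quad \text{for every } s \in 2^{k_n}.
\]
Once this is achieved, the conclusion follows immediately: if $x \in F$, pick $n$ with $x \in F_n$; then for each $m \geq n$ we have $x \in F_m$, so it cannot be the case that $x \restricted I_m = x_F \restricted I_m$ (otherwise $x$ would lie in the cylinder $[(x\restricted k_m) \concat x_F\restricted I_m]$, which is disjoint from $F_m$). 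Hence $\foralmostall m$ we have $x\restricted I_m \neq x_F\restricted I_m$.

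The core of the construction is the inductive step. Suppose $k_n$ and $x_F\restricted k_n$ have been chosen. Enumerate $2^{k_n}$ as $s_0,\ldots,s_{2^{k_n}-1}$. Because $F_n$ is closed nowhere dense, the cylinder $[s_0]$ contains a basic open subset disjoint from $F_n$, so there exists $w^0 \in 2^{<\omega}$ with $[s_0\concat w^0]\cap F_n = \emptyset$. Now the cylinder $[s_1 \concat w^0]$ may meet $F_n$, but nowhere-density again gives an extension $w^1 \supseteq w^0$ with $[s_1 \concat w^1] \cap F_n = \emptyset$, and the inclusion $[s_0\concat w^1]\subseteq [s_0\concat w^0]$ preserves the property already established for $s_0$. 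Iterating through all $2^{k_n}$ prefixes produces a single string $w$ (of some length $\ell$) such that $[s_i \concat w] \cap F_n = \emptyset$ for every $i$. Set $k_{n+1} = k_n + \ell$ and let $x_F \restricted I_n := w$.

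The main obstacle I anticipate is precisely this step of finding a \emph{uniform} $w$ that works simultaneously for all $2^{k_n}$ prefixes, since $F_n$ need not respect the cylinder structure at level $k_n$. The iterative extension above resolves this by exploiting the fact that once a cylinder has been chosen disjoint from the closed set $F_n$, any further extension preserves disjointness; combined with the finiteness of $2^{k_n}$, finitely many successive refinements suffice. No further bookkeeping is required beyond monotonicity of the sequence $(F_n)$, which ensures that the single threshold $n$ (coming from $x\in F_n$) already yields disjointness at all subsequent levels.
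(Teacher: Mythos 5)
Your proof is correct. The paper does not actually prove this lemma---it cites it directly from Bartoszy\'nski--Judah (Theorem 2.2.4)---and your argument reproduces the standard proof of that characterization: write $F$ as an increasing union of closed nowhere dense sets $F_n$, and at stage $n$ chain finitely many nowhere-density extensions through the $2^{k_n}$ cylinders at level $k_n$ to find a single string $w$ with $[s\concat w]\cap F_n=\emptyset$ for every $s\in 2^{k_n}$; the verification that any $x\in F_n$ then disagrees with $x_F$ on every block $I_m$ with $m\geq n$ is exactly right. One cosmetic point: to guarantee that $\{I_n:n\in\omega\}$ is a partition of $\omega$ into (nonempty) intervals you should insist that $w$ be nonempty at each stage---if $F_n=\emptyset$ the nowhere-density step as stated would permit the empty extension---but this is repaired by appending one arbitrary bit to $w$ when needed.
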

	  
	  \begin{theorem}\label{meager Silver}
			For every meager set $F\se 2^\omega$ and every Silver tree $T$ there is a Silver tree $T'\se T$ such that for every $n\in\omega$
			\[
				F+\underbrace{[T']+[T']+...+[T']}_{n\ti{--times}}\in\Mm.
			\]
		\end{theorem}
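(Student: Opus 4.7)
The plan is to reduce the infinite family of meagerness assertions to a single one. By the observation preceding the theorem, for any Silver $T'$ the $n$--fold sum is contained in $[T']\cup([T']+[T'])$, so it is enough to produce a single Silver subtree $T' \se T$ for which both $F+[T']$ and $F+([T']+[T'])$ are meager. As the excerpt recalls, $[T']+[T']$ is the body of a Silver tree whose parameter set is the same $A'$ as that of $T'$ (only the base switches to $\mbb{0}$), so both requirements become instances of one claim: for every Silver tree $S$ with parameter set $A'$, the set $F+[S]$ is meager.

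I would first invoke Lemma \ref{baza meagery} to fix $x_F\in 2^\w$ and a partition $\{I_n : n\in\w\}$ of $\w$ into intervals with $F\se \{x:(\foralmostall n)\, x\rest I_n \neq x_F\rest I_n\}$. Let $(x_T,A)$ be the parameters of $T$. Since $A$ is infinite, infinitely many $I_n$ meet $A$, so I can pick $A' = \{a_k: k\in\w\}\se A$ with $a_k\in I_{m_k}$ and $m_{k+1}\geq m_k+2$. Let $T'$ be the Silver subtree of $T$ with parameters $A'$ and $x_{T'}$, taking $x_{T'}=x_T$ off $A$ and (say) $x_{T'}\equiv 0$ on $A$. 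Group the $I_n$'s into blocks $J_k = I_{m_k}\cup I_{m_k+1}\cup\dots\cup I_{m_{k+1}-1}$; the sparsity condition $m_{k+1}\geq m_k+2$ guarantees that each $J_k$ properly contains the $A'$--free interval $I_{m_k+1}$.

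The central computation is that for any Silver tree $S$ with parameter set $A'$ and base $z\in 2^\w$ and any $t\in[S]$, one has $t\rest I_n = z\rest I_n$ whenever $I_n\cap A' = \0$, because every coordinate in such an $I_n$ lies outside $A'$ and is therefore fixed to $z$. Setting $y=x_F+z$, the relation $f\rest I_n \neq x_F\rest I_n$ (valid for almost all $n$, for any $f\in F$) thus transfers to $(f+t)\rest I_n \neq y\rest I_n$ on every $A'$--free interval; consequently $(f+t)\rest J_k \neq y\rest J_k$ for almost all $k$, so $F+[S]$ lies inside a set of the form produced by Lemma \ref{baza meagery} (relative to the partition $\{J_k\}$ and base $y$) and is therefore meager. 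Applying this with $S=T'$ and with the Silver tree whose body is $[T']+[T']$ (parameters $(\mbb{0},A')$) yields both meagerness statements and hence the theorem.

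The main obstacle is the combinatorial bookkeeping between the partition $\{I_n\}$ supplied by Lemma \ref{baza meagery} and the new parameter set $A'$: it must be sparse enough inside $A$ that the coarsened partition $\{J_k\}$ always offers an $A'$--free slot on which the mismatch $f\rest I_n \neq x_F\rest I_n$ survives the addition of $t$, yet $A'$ must still be an infinite subset of $A$ so that $T'$ is a genuine Silver tree. The condition $m_{k+1}\geq m_k+2$ is precisely what secures both demands simultaneously, which is why the Silver case is substantially easier than the general perfect or splitting cases to follow.
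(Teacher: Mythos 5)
Your proof is correct and follows essentially the same route as the paper: invoke the Bartoszyński--Judah characterization of meager sets, choose an infinite $A'\se A$ that is sparse with respect to the partition $\{I_n\}$, coarsen the partition into blocks each guaranteed to contain an $A'$-free interval on which the mismatch $f\rest I_n\neq x_F\rest I_n$ survives the addition of a branch, and use the parity observation $\sum_{i<n}[T']\se[T']\cup([T']+[T'])$ to reduce to two cases (the paper encodes this same parity reduction implicitly through its target sequences $x_{F_n}=x_F+n\cdot x_T$, which take only two values). The only cosmetic slip is that your blocks $J_k=I_{m_k}\cup\dots\cup I_{m_{k+1}-1}$ miss the initial segment $I_0\cup\dots\cup I_{m_0-1}$ when $m_0>0$; absorbing it into $J_0$ fixes this trivially.
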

		\begin{proof}
		  Let $F$ be meager with $x_F$ and $(I_n:\; n\in\omega)$ be as in the Lemma \ref{baza meagery} and let $T$ be a Silver tree with $x_T$ and $A$ as per characterization of Silver trees \ref{def Silver}. Let $J_n=I_{2n}\cup I_{2n+1}$ and $A'\se A$ such that $|A|=\aleph_0$ and $|A'\cap J_n|\leq 1$. Let 
		  \[
		    x_{F_n}=x_F+\underbrace{x_T+x_T+\dots+x_T}_{n\ti{--times}}
		  \]
		  for each $n\in\omega$. For each $n\in\omega$ set
		  \[
		    F_n=\{x\in 2^\omega:\; (\foralmostall k)(x\rest J_k \ne x_{F_n}\rest J_k)\}
		  \]
		\end{proof}
		
		\begin{theorem}
			For every meager set $F\se 2^\omega$ and every (uniformly) perfect tree $T$ there is a (uniformly) perfect tree $T'\se T$ such that for every $n\in\omega$
			\[
				F+\underbrace{[T']+[T']+...+[T']}_{n\ti{--times}}\in\Mm.
			\]
		\end{theorem}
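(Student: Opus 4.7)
Our plan is to adapt the argument of Theorem~\ref{meager Silver} by a fusion construction. Apply Lemma~\ref{baza meagery} to $F$ to obtain $x_F \in 2^\omega$ and an interval partition $(I_n)_{n\in\omega}$ with $F \se \{x : (\foralmostall n)\, x \rest I_n \neq x_F \rest I_n\}$. Let $S$ denote the set of splitting levels of $T$ (which may fail to be uniform in the general perfect case).

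We shall construct $T' \se T$ of the same kind as $T$ by fusion. Along the way we choose its splitting levels $S' = \{s'_k : k\in\omega\} \se S$ very sparsely (in the uniformly perfect case these are single shared levels; in the general perfect case they may differ node-by-node, collected into common fusion stages), and simultaneously a coarsening $(J_k)_{k\in\omega}$ of $(I_n)$ into intervals with exactly one fusion-stage-$k$ splitting inside each $J_k$. Write $N_k := |\{n : I_n \se J_k\}|$ for the number of $I$-intervals inside $J_k$; the $J_k$'s are chosen so that $N_k \to \infty$. At every splitting level of $T$ that is not chosen for $T'$, every node of $T'$ is committed to a single extension value. Let $x_{T'} \in [T']$ be the canonical branch obtained by taking all free splitting bits of $T'$ to be $0$, and set
\[
B = \{\ell \in \omega : (\exists y \in [T'])\, y(\ell) \neq x_{T'}(\ell)\}.
\]
During the fusion we further refine $T'$ so that $|B \cap J_k| < N_k$ for almost all $k$. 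The delicate point, absent in the Silver case, is that non-splitting levels of $T$ may witness forced disagreement between branches of $T'$, contributing unwanted elements to $B$; these are removed by a pigeonhole restriction on the nodes extending through the problematic level, each such restriction costing one split of $T'$ that is recouped using the abundance of splits available in $S$.

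Given such $T'$, for each $n \in \omega$ set $x_{F_n} = x_F + n \cdot x_{T'}$ and consider the meager set
\[
F_n = \{x \in 2^\omega : (\foralmostall k)\, x \rest J_k \neq x_{F_n} \rest J_k\}.
\]
For any $f \in F$ and $y_1, \dots, y_n \in [T']$ one computes
\[
(f + y_1 + \dots + y_n + x_{F_n}) \rest J_k = (f + x_F) \rest J_k + \sum_{i=1}^{n} (y_i + x_{T'}) \rest J_k.
\]
For almost all $k$, the first summand has at least $N_k$ nonzero entries (one per $I$-interval in $J_k$, past the discrepancy threshold for $f$), while the second summand's support is contained in $B \cap J_k$ and so has strictly fewer than $N_k$ nonzero entries. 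Having different Hamming weights the two cannot sum to zero, so $f + y_1 + \dots + y_n \in F_n$, and therefore $F + n[T'] \se F_n \in \Mm$.

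The hard part is constructing $T'$ with $|B \cap J_k| < N_k$ while preserving the (uniformly) perfect structure. For Silver trees this is automatic, since branches agree at every non-splitting level, forcing $B = S'$; for trees of (uniformly) perfect type the heart of the argument is managing the forced disagreements at non-splitting levels of $T$, which requires carefully budgeting the losses and gains of splits through the pigeonhole step. In the uniformly perfect case, where all nodes must share a common splitting level at each fusion stage, this budgeting is tighter than in the general perfect case, but identical in principle.
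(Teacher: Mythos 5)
The Hamming-weight plan hinges on being able to prune $T$ to a perfect $T'\se T$ whose disagreement set $B=\{\ell: (\exists y\in[T'])\,y(\ell)\neq x_{T'}(\ell)\}$ satisfies $|B\cap J_k|<N_k$ for almost all $k$, and this is where the argument breaks down -- not as a detail of the pigeonhole bookkeeping, but in principle. One can construct a perfect tree $T$ using an error-correcting code: fix splitting levels $s_0<s_1<\dots$ with $s_{m+1}-s_m\gtrsim m$, put exactly $2^m$ nodes at level $s_m$, and choose their (unique, non-splitting) extensions across $[s_m,s_{m+1})$ to be $2^m$ codewords of a binary code of relative minimum distance $\delta>0$ (Gilbert--Varshamov). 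Then any two distinct branches of $T$ disagree at asymptotic lower density $\ge\delta$ past their divergence point, and hence so do any two branches of any perfect $T'\se T$; thus $B(T')$ always has lower density $\ge\delta$, no matter how $T'$ is chosen. Now take a meager $F$ whose Lemma~\ref{baza meagery} partition has $|I_n|>1/\delta$ for all $n$. For any coarsening $(J_k)$ into unions of $I$-intervals one has $N_k<\delta|J_k|$, while $|B\cap J_k|\ge\delta|J_k|-o(|J_k|)$, so $|B\cap J_k|\ge N_k$ for all large $k$. The pigeonhole-and-recoup budgeting you sketch (prune to the majority bit at a forced-disagreement level, then split again later) cannot rescue this: in the tree above, forced disagreements occur at a \emph{positive density} of levels below every node, whereas each recoup split adds its own element to $B$; the trade is strictly losing.

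The paper's proof avoids sparsity of $T'$ altogether. It exploits that on each block $J_n$ every branch of $T'$ agrees with one of finitely many nodes $\sigma'_\tau$, $\tau\in 2^n$, and builds the reference $x_H$ of the covering meager set \emph{adaptively}: $J_n$ is made large enough to contain $(2^n)^{n+1}$ $I$-intervals, one reserved for each possible partial sum $\sum_j\sigma'_{\tau_j}$ of at most $n$ node prefixes, and on that reserved interval $x_H$ is defined as $x_F$ plus that exact sum. The tree contribution is then cancelled \emph{exactly} on the relevant $I$-interval, so the required disagreement comes from $f\neq x_F$ alone; no Hamming-weight comparison and no thinness of $B$ is needed. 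This covering-by-enumeration mechanism is what makes the argument work for arbitrary perfect $T$, and it is the idea your proposal is missing.
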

		\begin{proof}
		  Let us focus on the case of perfect trees, the proof for uniformly perfect trees is almost identical. So, let $T$ be a perfect tree and $F$ be a meager set with associated $x_F\in 2^\omega$ and $(I_n:\; n\in\omega)$ according to the Lemma \ref{baza meagery}. Let $k_{-1}=-1$, $k_{n+1}=k_n+(2^{n+1})^{n+2}$ and set
		  \begin{align*}
		    \textstyle J_0=I_0,\quad J_{n+1}=\bigcup_{k=k_{n}+1}^{k_{n+1}}I_k.
		  \end{align*}
		  Let $\sigma_{\0}'$ be a splitting node of $T$ such that $J_0\se \sigma_\0'$ and for each $\tau\in 2^n$ and $i=0,1$ let $\sigma_{\tau\cons i}'$ be a splitting node of $T$ such that ${\sigma_{\tau}'}\cons i\se \sigma_{\tau\cons i}$ and $\bigcup_{k=0}^{n+1}J_k\se \dom(\sigma_{\tau\cons i}')$. We use a surjection $\{1,2,\dots, 2^{n(n+1)}\}\ni i\mapsto (\tau_0^i, \tau_1^i, \dots, \tau_{m_i}^i)\in (\{0,1\}^n)^{\leq n}$ to set
		  \[
		    x_H\rest I_{k_{n-1}+i}=\left( x_F+\sum_{j=0}^{m_i}\sigma'_{\tau^i_{j}}\right) \rest I_{k_{n-1}+i}.
		  \]
		  The construction is complete. Finally, set $T'=\{\sigma\in 2^\omega:\; (\exists \tau\in 2^{<\omega})(\sigma\se \sigma'_\tau)\}$. It follows from the construction that for $H=\{x\in 2^\omega:\; (\foralmostall n)(x\rest J_n\neq x_H\rest J_n)\}$ satisfies
		  \[
		    F+\underbrace{[T']+[T']+\dots [T']}_{n\ti{--times}}\se H
		  \]
		  for each $n\in\omega$.
		\end{proof}
	  The following result concerned with splitting trees has a slightly weaker thesis.
	  \begin{theorem}\label{meager splitting}
			For every meager set $F\se 2^\omega$ there is a splitting tree $T$ such that for every $n\in\omega$
			\[
				F+\underbrace{[T]+[T]+...+[T]}_{n\ti{--times}}\in\Mm.
			\]
		\end{theorem}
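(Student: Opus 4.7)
The plan is to produce a single splitting tree $T$ whose body is closed under coordinate-wise addition, so that the iterated sums collapse, $\underbrace{[T]+\cdots+[T]}_{n\text{--times}}=[T]$ for each $n\geq 1$, and the theorem reduces to a single estimate $F+[T]\in\Mm$.

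Using Lemma \ref{baza meagery}, I fix $x_F\in 2^\w$ and a partition $\{I_k:k\in\w\}$ of $\w$ into nonempty intervals with $F\se\{x:(\foralmostall k)(x\rest I_k\neq x_F\rest I_k)\}$. I pair the intervals into blocks $J_n=I_{2n}\cup I_{2n+1}$ and set
\[
T=\{\sigma\in 2^{<\w}:(\forall n)(\sigma\rest (J_n\cap\dom(\sigma))\text{ is constant})\},
\]
so that $[T]=\{y\in 2^\w:(\forall n)(y\rest J_n\in\{\mbb{0}\rest J_n,\mbb{1}\rest J_n\})\}$. Because the coordinate-wise sum of two block-constant sequences is block-constant, $[T]$ is a subgroup of $(2^\w,+)$ and the $n$-fold sums collapse. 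For the splitting condition: given $\sigma\in T$ with $|\sigma|$ in block $J_{n_\sigma}$, each later block $J_m$ with $m>n_\sigma$ is untouched by $\sigma$, so for $n>\max J_{n_\sigma+1}$ both values at position $n$ are attained by extending $\sigma$ with the appropriate constant on the block containing $n$.

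To show $F+[T]\in\Mm$, for each $n$ I pick $w_n\in 2^{J_n}$ with $w_n\rest I_{2n}=x_F\rest I_{2n}$ and $w_n\rest I_{2n+1}=(x_F+\mbb{1})\rest I_{2n+1}$, and let $x'\in 2^\w$ be the unique point with $x'\rest J_n=w_n$ for all $n$. The set $\{z:(\foralmostall n)(z\rest J_n\neq x'\rest J_n)\}$ is meager (it is a countable union of closed nowhere dense sets, and fits the format of Lemma \ref{baza meagery} with partition $(J_n)$ and point $x'$). Given $z=f+y$ with $f\in F$ and $y\in[T]$, for almost every $n$ we have $f\rest I_{2n}\neq x_F\rest I_{2n}$ and $f\rest I_{2n+1}\neq x_F\rest I_{2n+1}$, while $y\rest J_n$ equals $\mbb{0}\rest J_n$ or $\mbb{1}\rest J_n$. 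In the first subcase $z\rest I_{2n}=f\rest I_{2n}\neq x_F\rest I_{2n}=w_n\rest I_{2n}$; in the second subcase $z\rest I_{2n+1}=(f+\mbb{1})\rest I_{2n+1}$ and $w_n\rest I_{2n+1}=(x_F+\mbb{1})\rest I_{2n+1}$, which differ because $f\rest I_{2n+1}\neq x_F\rest I_{2n+1}$. Either way $z\rest J_n\neq w_n$, so $F+[T]$ is contained in this meager set.

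The main obstacle is the choice of $w_n$: the splitting freedom of $[T]$ shifts each block by either $\mbb{0}\rest J_n$ or $\mbb{1}\rest J_n$, so a single target $w_n$ must be ``bad'' for both shifts simultaneously. Pairing two subintervals per block, and matching $w_n$ to $x_F$ on one of them and to $x_F+\mbb{1}$ on the other, is what makes such a $w_n$ exist. This is also why the thesis is weaker than for Sacks or Silver trees: one cannot in general pass to a subtree of a prescribed splitting tree while retaining the blockwise constancy that makes the group-collapse argument work.
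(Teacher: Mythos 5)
Your proof is correct, but it takes a genuinely different route from the paper's. The paper builds a tree whose branches have exactly one $1$ in each block, where the blocks grow in length (the $m$-th block is a union of roughly $m$ of the original intervals), and then runs a counting argument: $n$ branches from $[T]$ can disturb at most $n$ of the subintervals inside a late block, so some original interval survives on which the sum agrees with the element of $F$ and hence differs from $x_F$ there. You instead arrange $[T]$ to be a subgroup of $(2^\omega,+)$ --- the block-constant sequences on the paired intervals $J_n=I_{2n}\cup I_{2n+1}$ --- so the iterated sums collapse to the single set $F+[T]$, and you defeat the only two possible shifts per block ($\mbb{0}$ and $\mbb{1}$) by a diagonal choice of the witness point, matching $x_F$ on $I_{2n}$ and $x_F+\mbb{1}$ on $I_{2n+1}$; your verification of the splitting property and of the meagerness of the target set $\{z:(\foralmostall n)(z\rest J_n\neq x'\rest J_n)\}$ is sound. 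What each approach buys: your group-closure trick is uniform in $n$ (the block structure does not depend on the number of summands) and arguably simpler, while the paper's sparse-support construction does not need $[T]$ to be closed under addition and is reused almost verbatim in its measure-theoretic analogues (the splitting-tree results for $\Nn$ and $\Ee$), where the one-$1$-per-block pattern, rather than block-constancy, is what keeps the relevant densities small.
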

		\begin{proof} 
			Let $F\se 2^{\omega}$ be a meager set and assume there is $x_F\in 2^\omega$ and a partition of $\omega$ into intervals $\{I_n: n\in\omega\}$, as in Lemma \ref{baza meagery}.
			
		  Let $x_{F'}=x_F$ and let $I'_0=I_0$ and
			\[
				I'_{n+1}=\bigcup\{I_k: k\in [\frac{n(n+1)}{2}+1,\frac{(n+1)(n+2)}{2}+1)\}
			\]
			for $n>0$. Define 
			\[
				B_0=\{\tau\in 2^{I'_0}: (\exists n\in I'_0)(\forall k\in I'_0\bez\{n\})(\tau(n)=1 \land \tau(k)=0)\}
			\]
			and for any  $n\in\omega$
			\begin{align*}
				B_{n+1}=&\{\tau\cons \sigma: \tau\in B_n\; \land\; \sigma\in 2^{|I'_{n+1}|}\;\land
				\\
				&\;\land\;(\exists m<|I'_{n+1}|)(\forall k< |I'_{n+1}|\bez\{m\})(\sigma(m)=1 \land \sigma(k)=0)\}.
			\end{align*}
			Set $B=\bigcup_{n\in\omega}B_n$ and $T=\{\tau\in 2^{<\omega}: (\exists \sigma\in B)(\tau\se\sigma)\}$. Clearly $T$ is a splitting tree. To see that the rest of the thesis holds, let $n\in\omega$ and take $x\in F+\underbrace{[T]+[T]+...+[T]}_{\tn{n-times}}$. Then there are $x_0\in F$ and $x_1, x_2, ..., x_n\in [T]$ such that $x=x_0+x_1+...+x_n$. Consider $I'_{n+k}$ for $k>0$ large enough so that $x_0\rest I'_{n+k}\neq x_{F'}\rest I'_{n+k}$. Since $I'_{n+k}$ is a union of $n+k$ intervals from the original partition, there is $n_0\in\omega$ with $I_{n_0}\se I'_{n+k}$ and $x_j\rest I_{n_0}=\mathbb{0}$ for every $j\in\{1,2,...,n\}$. Then $x_0+x_1+...+x_n\rest I'_{n+k}\neq x_{F'}\rest I'_{n+k}$. It follows that
			\[
				F+\underbrace{[T]+[T]+...+[T]}_{\tn{n-times}}\se\{x\in 2^\omega: (\foralmostall n)(x\rest I'_n\neq x_{F'}\rest I'_n)\}\in\Mm.
			\]
		\end{proof}
		
		The natural question remains if one can prove the following statement regarding splitting trees.
		\begin{problem}
		  Let $F$ be a meager subset of $2^\w$. Is it true that for every splitting tree $T$ there exists a splitting tree $T'\se T$ such that
		  \[
		    F+\underbrace{[T']+[T']+...+[T']}_{\tn{n-times}}\in\Mm?
		  \]
		\end{problem}
	  
	\subsection*{Measure zero case}
	  
	  Let us recall the following notion of small sets from \cite{BarJu}.
	  \begin{definition}
			We call $A\se 2^\omega$ a small set if there is a partition $\{I_n: \;n\in\w\}$ of $\omega$ into intervals and a collection $(J_n:\; n\in \w)$ such that $J_n\se 2^{I_n}$ for $n\in\w$ with $\sum_{n\in\w}\frac{|J_n|}{2^{|I_n|}}<\infty$ and
			\[
				A\se\{x\in 2^\omega: (\existinfty n\in\w)(x\rest I_n\in J_n)\}.
			\]
		\end{definition}
		Note that we use the simpler version of the definition using a partition of $\omega$ into intervals rather than any finite sets. However it is sufficient for the purpose of this paper, since we use the fact that each null set is a union of two small sets, even if the partition is into intervals \cite[Theorem 2.5.7]{BarJu}. The differences and similarities between these definitions (among other things) were discussed in \cite{BarSheSmall}.
		
		\begin{lemma}\label{lemma for small sets}
			For every small set $F\se 2^\omega$ and every Silver tree $T$ there is a Silver tree $T'\se T$ such that for every $n\in\omega$
			\[
			F+\underbrace{[T']+[T']+...+[T']}_{n\ti{--times}}\text{ is small.}
			\]
		\end{lemma}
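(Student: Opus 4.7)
The plan is to exploit the special structure of Silver trees noted in the introduction: if $T'$ has parameters $(x_{T'},A')$, then $\underbrace{[T']+\dots+[T']}_{n\ti{--times}}$ is completely determined on $\omega\bez A'$ (by either $x_{T'}$ or $\mbb{0}$, depending on the parity of $n$) and arbitrary on $A'$. Consequently, $F+\underbrace{[T']+\dots+[T']}_{n\ti{--times}}$ equals the translate $F+y_n$ by a single fixed sequence $y_n$, summed with an arbitrary element of $G:=\{e\in 2^\omega:\supp(e)\se A'\}$.

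Given $F$ small with partition $\{I_n:n\in\omega\}$ and witnesses $J_n\se 2^{I_n}$ satisfying $\sum_n|J_n|/2^{|I_n|}<\infty$, and $T$ a Silver tree with parameters $(x_T,A)$, I would first choose $A'\in[A]^\omega$ with $|A'\cap I_n|\le 1$ for every $n\in\omega$. This is possible because $A$ is infinite while each $I_n$ is finite. Let $T'$ be the Silver tree with parameters $(x_T,A')$; then $T'\se T$.

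Now fix $n\in\omega$. Every element of $F+\underbrace{[T']+\dots+[T']}_{n\ti{--times}}$ can be written as $f+y_n+e$ with $f\in F$, a fixed sequence $y_n$ vanishing on $A'$, and some $e\in G$. The translate $F':=F+y_n$ is small with the same partition and sets $J'_m:=J_m+y_n\rest I_m$ of size $|J'_m|=|J_m|$. To absorb the action of $G$, set
\[
J''_m:=\{j+e\rest I_m:\,j\in J'_m,\;e\in G\}.
\]
Since $|A'\cap I_m|\le 1$, each $e\rest I_m$ takes at most two values, so $|J''_m|\le 2|J_m|$ and
\[
\sum_m \frac{|J''_m|}{2^{|I_m|}} \le 2\sum_m \frac{|J_m|}{2^{|I_m|}} < \infty.
\]
The inclusion $F+\underbrace{[T']+\dots+[T']}_{n\ti{--times}}\se\{x\in 2^\omega:(\existinfty m)(x\rest I_m\in J''_m)\}$ then follows directly from the decomposition, showing smallness.

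The only delicate point is the choice of $A'$ sparse with respect to $\{I_n\}$; without the bound $|A'\cap I_m|\le 1$, the blowup factor $2^{|A'\cap I_m|}$ could destroy convergence of the series. Everything else is a straightforward bookkeeping combined with the standard parity observation about Silver-tree sums recalled in the introduction.
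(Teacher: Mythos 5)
Your proposal is correct and takes essentially the same route as the paper: thin the parameter set $A$ to an infinite $A'\se A$ meeting each $I_m$ in at most one point, pass to the Silver subtree $T'$ with parameters $(x_T,A')$, and absorb the resulting finitely many possible restrictions to each $I_m$ by enlarging $J_m$ by a bounded factor. The only cosmetic difference is that you use the parity of $n$ to pick a single translate $y_n$ (giving a factor-$2$ blowup per block), while the paper simply unions over $\{\mbb{0},x_T\rest I_m\}$ and over $\{\mbb{0},\chi_{A'\cap I_m}\}$ to get a uniform, $n$-independent enlargement $J''_m$ with $|J''_m|\le 4|J_m|$; both clearly preserve convergence of $\sum_m |J''_m|/2^{|I_m|}$.
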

		\begin{proof}
		  Let $F$ be small with an associated partition $\{I_n:\; n\in\omega\}$ and a collection of sequences $\{J_n: \; n\in\omega\}$. Let $T$ be a Silver tree with associated $A$ and $x_T$. Set $A'\se A$ infinite such that $|A'\cap I_n|\leq 1$ for any $n\in\omega$. Set new sequences of allowed patterns $J'_n=(J_n+x_T\rest I_n)\cup J_n$ and $J''_n=J'_n+(00\dots010\dots))$ ($1$ on position $i\in A\cap I_n$). See that the set
		  \[
		    \{x\in 2^\w:\; (\existinfty n\in \w)(x\rest I_n\in J''_n)\}
		  \]
		  is small since $|J''_n|\le 4|J_n|$.
		\end{proof}
		
		\begin{theorem}\label{Null Silver}
			For every null set $F\se 2^\omega$ and every Silver tree $T$ there is a Silver tree $T'\se T$ such that for every $n\in\omega$
			\[
				F+\underbrace{[T']+[T']+...+[T']}_{n\ti{--times}}\in\Nn.
			\]
		\end{theorem}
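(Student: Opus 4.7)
The plan is to reduce this theorem to Lemma~\ref{lemma for small sets} by combining it with the fact, noted just before that lemma, that every null subset of $2^\omega$ is a union of two small sets (with smallness taken with respect to partitions of $\omega$ into intervals). Write $F = F_1 \cup F_2$ accordingly, with each $F_i$ small.

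First I would apply Lemma~\ref{lemma for small sets} to the small set $F_1$ and the Silver tree $T$, obtaining a Silver tree $T_1 \se T$ such that $F_1 + \underbrace{[T_1]+\dots+[T_1]}_{n\ti{--times}}$ is small for every $n \in \omega$. Next I would apply the same lemma to $F_2$ and $T_1$, producing a Silver tree $T' \se T_1$ such that $F_2 + \underbrace{[T']+\dots+[T']}_{n\ti{--times}}$ is small for every $n \in \omega$. This $T' \se T$ is the tree claimed by the theorem.

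To verify the conclusion, fix $n \in \omega$. Since $T' \se T_1$ implies $[T'] \se [T_1]$, monotonicity of algebraic sums gives
\[
F_1 + \underbrace{[T']+\dots+[T']}_{n\ti{--times}} \se F_1 + \underbrace{[T_1]+\dots+[T_1]}_{n\ti{--times}},
\]
and the right-hand side is small by the choice of $T_1$. Since a subset of a small set is small with the same defining partition and pattern sequence, $F_1 + \underbrace{[T']+\dots+[T']}_{n\ti{--times}}$ is small. By construction, $F_2 + \underbrace{[T']+\dots+[T']}_{n\ti{--times}}$ is small as well. Because $F = F_1 \cup F_2$, the set $F + \underbrace{[T']+\dots+[T']}_{n\ti{--times}}$ is the union of these two small sets and therefore belongs to $\Nn$.

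No substantial obstacle arises beyond securing the small-set decomposition of $F$ with interval partitions, which is precisely the content of \cite[Theorem~2.5.7]{BarJu}. All the combinatorial work was absorbed into Lemma~\ref{lemma for small sets}, and the theorem itself is a clean two-step nesting of Silver subtrees, mirroring in spirit the meager argument of Theorem~\ref{meager Silver} but invoking the small-set representation of null sets in place of the basis from Lemma~\ref{baza meagery}.
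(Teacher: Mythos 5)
Your proposal is correct and follows essentially the same route as the paper's proof: decompose $F=F_1\cup F_2$ into small sets via \cite[Theorem~2.5.7]{BarJu}, apply Lemma~\ref{lemma for small sets} twice (first to $F_1$ and $T$, then to $F_2$ and the resulting subtree), and conclude by taking the union. The only difference is that you explicitly spell out the monotonicity step showing $F_1 + n[T']$ remains small after passing to the smaller subtree $T'$, which the paper leaves implicit.
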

		\begin{proof}
		  Let $F$ be a null set and $F_1$, $F_2$ be small sets such that $F=F_1\cup F_2$. Let $T$ be a Silver tree. Then by Lemma \ref{lemma for small sets} there exists a Silver tree $T'\se T$ such that 
		  \[
				F_1+\underbrace{[T']+[T']+...+[T']}_{n\ti{--times}}  \text{ is small}.
			\]
			Again by Lemma \ref{lemma for small sets} for $F_2$ and $T'$ there exists a Silver tree $T''$ such that
			\[
			  F_2+\underbrace{[T'']+[T'']+...+[T'']}_{n\ti{--times}} \text{ is small}.
			\]
		  Then $F+\underbrace{[T'']+[T'']+...+[T'']}_{n\ti{--times}}$ is null as a union of two small sets.
		\end{proof}
		
		The above result allow us to obtain the thesis of Theorem \ref{Luziny t0} for $v_0$.
		\begin{corollary}\label{coro v0}
		  Let $\c$ be a regular cardinal. Then for every generalized Luzin set $L$ and generalized Sierpiński set $S$ we have $L+S\in v_0$.
		\end{corollary}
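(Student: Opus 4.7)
Given a Silver tree $T$, the aim is to construct a Silver subtree $T' \subseteq T$ with $[T'] \cap (L+S) = \emptyset$; this will establish $L+S \in v_0$. The strategy is to mimic the proof of Theorem~\ref{Luziny t0} for $s_0$, $m_0$, and $l_0$ from \cite{MiZeb, MiRalZebNon}, with Theorem~\ref{Null Silver} playing the role of the corresponding null-preserving results for Sacks, Miller, and Laver trees. Since $\c$ is regular and $L$, $S$ are generalized Luzin and generalized Sierpi\'nski sets, $|L|=|S|=\c$, and we enumerate $L = \{l_\alpha : \alpha < \c\}$ and $S = \{s_\alpha : \alpha < \c\}$.

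The key reductions rely on three observations: for any Silver subtree $T''$ with co-infinite free set, the body $[T'']$ is both null and meager; by the Luzin property of $L$, $|L \cap (s + [T''])| < \c$ for each $s \in S$ (since $s + [T'']$ is meager); and by the Sierpi\'nski property of $S$, $|S \cap (l + [T''])| < \c$ for each $l \in L$ (since $l + [T'']$ is null). Rewriting $[T'] \cap (L+S) = \emptyset$ as $L \cap (S + [T']) = \emptyset$ reveals that one must arrange for the translate $S + [T']$ to miss all of $L$. The plan is a transfinite recursion on $\alpha < \c$ producing Silver trees $T_\alpha \subseteq T$; at a successor step $\alpha+1$ we apply Theorem~\ref{Null Silver} to a null set $F_\alpha$ capturing the bad pairs $l_\beta + s_\gamma$ already enumerated, obtaining a Silver $T_{\alpha+1} \se T_\alpha$ in which these bad branches are removed while the sums $F_\alpha + [T_{\alpha+1}]$ stay null (so the next stage still has room to work).

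The principal difficulty lies in carrying out this transfinite fusion over $\c$-many stages. Silver trees are rigidly parameterized by a co-infinite $A \se \omega$ and a function $x_T : \omega \setminus A \to 2$, so a strictly descending sequence of free sets $A_{T_\alpha}$ cannot have length exceeding $\omega$: a naive transfinite fusion stabilizes prematurely and only handles countably many pairs. The resolution is to exploit the regularity of $\c$ and to bundle the bookkeeping so that each Silver refinement absorbs many pairs at once, by choosing $F_\alpha$ large enough to encode, via Theorem~\ref{Null Silver}, the combined interaction of the already enumerated $l_\beta$'s, $s_\gamma$'s, and the current Silver body. Verifying that the resulting Silver tree $T'$ misses all of $L+S$ while the descending sequence of Silver trees remains well-defined through the transfinite induction is the technical heart of the argument.
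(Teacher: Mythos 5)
There is a genuine gap: you never actually give an argument, and the device you propose cannot be made to work. As you yourself observe, a well-ordered strictly decreasing sequence of Silver trees must be countable (each proper refinement removes at least one element from the free set of parameters), so a transfinite recursion of length $\c$ producing Silver trees $T_\alpha$ is impossible; the suggested remedy of ``bundling'' the bookkeeping is not a repair, because to absorb $\c$-many pairs into countably many refinements you would need the relevant part of $L+S$ to be covered by countably many null sets, for which there is no reason whatsoever (and a set of fewer than $\c$ already-enumerated sums $l_\beta+s_\gamma$ need not even be null, so the sets $F_\alpha$ are not well defined). You also conflate two different operations: Theorem~\ref{Null Silver} only guarantees that $F_\alpha+[T_{\alpha+1}]$ is null; it does not ``remove bad branches'', i.e.\ it gives no disjointness of $[T_{\alpha+1}]$ from anything.

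The intended argument (the adaptation of \cite[Theorem 2.12]{MiZeb} that the paper has in mind) needs no fusion at all and uses Theorem~\ref{Null Silver} exactly once. Fix a Silver tree $T$. Write $L=L_0\cup L_1$ where $L_0\se F$ for some null $F$ and $|L_1|<\c$ (every generalized Luzin set is contained in a null set modulo a set of size $<\c$, by splitting $2^\w$ into a meager and a null set). Apply Theorem~\ref{Null Silver} to $F$ and $T$ to get a Silver $T_1\se T$ with $F+[T_1]\in\Nn$; shrinking further, assume the free set of $T_1$ is coinfinite, so $[T_1]$ is null and nowhere dense. Then $S\cap(L+[T_1])\se \big(S\cap(F+[T_1])\big)\cup\bigcup_{l\in L_1}\big(S\cap(l+[T_1])\big)$ has size $<\c$ (each piece meets $S$ in $<\c$ points because it is null, and $\c$ is regular); and for each such $s$ the set $L\cap(s+[T_1])$ has size $<\c$ because $s+[T_1]$ is meager. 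Hence, again by regularity, $|(L+S)\cap[T_1]|<\c$. Finally split the free set of $T_1$ into two infinite pieces and, by fixing values on one piece, decompose $[T_1]$ into $\c$ pairwise disjoint Silver bodies; since fewer than $\c$ of them can meet $(L+S)\cap[T_1]$, one of them gives a Silver $T'\se T$ with $[T']\cap(L+S)=\0$, proving $L+S\in v_0$. None of the steps in this counting-plus-splitting scheme appear in your proposal, which is why it does not constitute a proof.
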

		
    The following result may seem out of context; however, it will be useful in the proof of Theorem \ref{null perfect}.
    \begin{lemma}\label{lemat analityczny}
		  Let $\sum_{n\in\omega}a_n=s<\infty$, where $a_n>0$ for all $n\in\omega$. Then there is a nondecreasing sequence of natural numbers $(k_n)_{n\in\omega}$ such that for each $b\in\omega$
		  \[
		    \sum_{n\in\omega}(2^b)^{k_n}a_n<2^{b^2}s.
		  \]
		\end{lemma}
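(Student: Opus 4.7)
The strategy is to partition $\omega$ into consecutive blocks on which the total mass of $(a_n)$ decays rapidly, and let $k_n$ be constant on each block with value growing with the block index. Since $\sum_n a_n = s$, the tails $R_N := \sum_{n \geq N} a_n$ tend to $0$, which gives the flexibility to enforce any prescribed rate of decay on the partial tails at our stopping times.

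Concretely, I would set $M_0 = 0$ and recursively choose $M_j$ for $j \geq 1$ to be the least integer with $R_{M_j} \leq s \cdot 2^{-j^2}$; convergence of the tails ensures that $M_j$ exists and $M_j \to \infty$. Define $k_n = j$ for $n \in [M_j, M_{j+1})$; this yields a nondecreasing $(k_n)$. For each $b \in \omega$, writing $\sigma_j = R_{M_j} - R_{M_{j+1}}$ and applying Abel summation,
\[
\sum_{n \in \omega} 2^{b k_n} a_n \;=\; \sum_j 2^{b j} \sigma_j \;=\; s + (2^b - 1) \sum_{j \geq 1} 2^{b(j-1)} R_{M_j}.
\]
Substituting the tail bound and completing the square via $b(j-1) - j^2 = -(j - b/2)^2 + b^2/4 - b$, each summand on the right is at most $2^{b^2/4 - b}$, while the shifted Gaussian-like series $\sum_{j \geq 1} 2^{-(j - b/2)^2}$ is bounded by an absolute constant $C$. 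This yields $\sum_n 2^{b k_n} a_n \leq s(1 + C \cdot 2^{b^2/4})$, which is strictly less than $2^{b^2} s$ for $b \geq 1$, since $b^2/4 \ll b^2$ leaves ample room.

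The main obstacle is calibrating the decay rate so that the strict inequality holds uniformly in $b$, in particular for small $b$ where the gap between $2^{b^2/4}$ and $2^{b^2}$ is narrow. If the crude constant $C$ from the Gaussian estimate is too large at $b = 1$, I would sharpen the rate from $2^{-j^2}$ to $2^{-2 j^2}$ (pushing the peak of $b(j-1) - 2j^2$ down to $b^2/8 - b$) and verify the first few cases directly from the explicit geometric-type bounds on the $R_{M_j}$. The formal edge case $b = 0$ degenerates to $s < s$, so the statement is intended for $b \geq 1$ (or with $\leq$ in place of $<$), which is the non-trivial regime needed for the application in Theorem \ref{null perfect}.
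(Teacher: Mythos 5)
Your construction of $(k_n)$ is the same as the paper's: you take stopping times $M_j$ with $R_{M_j}\leq s\cdot 2^{-j^2}$ and set $k_n=j$ on $[M_j,M_{j+1})$, while the paper takes $n_j=\min\{n:\sum_{i\geq n}a_i<s/2^{j^2}\}$ and $k_n=j$ on $[n_j,n_{j+1})$. What differs is the bookkeeping. You apply Abel summation to rewrite the sum as $s+(2^b-1)\sum_{j\geq1}2^{b(j-1)}R_{M_j}$, whereas the paper splits at $n=n_b$ and bounds the two pieces by $(2^b)^{b-1}s$ and $\sum_{i}2^{b(b+i)}s/2^{(b+i)^2}$ separately. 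Abel summation is the cleaner route here because it automatically accounts for $\sum_j\sigma_j=s$; the paper's split charges both pieces against the whole of $s$, and consequently its closing chain $2^{b(b-1)}s+2s\leq 2^{b^2}s$ does not in fact hold for $b\leq1$ (at $b=1$ it would need $3\leq2$), though the intended conclusion—that the transformed series converges for every $b$, which is all that Theorem~\ref{null perfect} uses—is still correct. You are also right to flag the $b=0$ degeneracy ($s<s$), which both proofs share and which is harmless for the application. Your worry about the constant at $b=1$ is resolved without sharpening the rate to $2^{-2j^2}$: do not absorb the factors $(2^b-1)$ and $2^{-b}$ into the crude uniform constant. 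Directly, for $b=1$,
\[
\sum_{n}2^{k_n}a_n \;<\; s\Bigl(1+\sum_{j\geq1}2^{\,j-1-j^2}\Bigr)\;<\;s\bigl(1+\tfrac{2}{3}\bigr)\;<\;2s,
\]
and for $b\geq2$ the coarse estimate $(2^b-1)\sum_{j\geq1}2^{b(j-1)-j^2}\leq 2.2\cdot2^{b^2/4}<2^{b^2}-1$ already closes the argument. So your proof is correct (indeed slightly more careful than the paper's own bound chain), and the proposed fallback is unnecessary.
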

		\begin{proof}
		  Let $n_0=0$ and $n_j=\min\{n\in\omega:\; \sum_{i\ge n}a_i<\frac{s}{2^{j^2}}\}$ and for all $n, j\in\omega$ set $k_n=j \iff n_j\le n<n_{j+1}$. Then for any $b\in\omega$
		  \begin{align*}
		    \sum_{n\in\omega}(2^b)^{k_n}a_n=\sum_{n<n_b}(2^b)^{k_n}a_n+\sum_{n\ge n_b}(2^b)^{k_n}a_n< (2^b)^{b-1}s+\sum_{i\in\omega}2^{b(b+i)}\frac{s}{2^{(b+i)^2}}<
		    \\
		    <2^{b(b-1)}s+2s\le2^{b^2}s.
		  \end{align*}
		\end{proof}
		
		\begin{theorem}\label{null perfect}
		  For every small set $A\in 2^\omega$ and every (uniformly) perfect tree $T$ there exists (uniformly) perfect tree $T'\se T$ such that
		  \[
				A+\underbrace{[T']+[T']+...+[T']}_{n\ti{--times}} \text{ is small}.
		  \]
		\end{theorem}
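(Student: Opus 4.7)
The plan is to mirror the construction used in the analogous meager perfect case, with Lemma \ref{lemat analityczny} supplying the quantitative trade-off between the combinatorial growth of $[T']$ and the decay of the small-set parameters. First, I would extract from the smallness of $A$ a partition $\{I_m:\; m\in\omega\}$ of $\omega$ into intervals together with $J_m\se 2^{I_m}$ satisfying $s:=\sum_m a_m<\infty$, where $a_m=|J_m|/2^{|I_m|}$. Assuming (by coarsening if necessary) that $a_m>0$ for every $m$, I would apply Lemma \ref{lemat analityczny} to produce a nondecreasing sequence $(k_m)_{m\in\omega}$ with $k_m\to\infty$ such that $\sum_m(2^b)^{k_m}a_m<2^{b^2}s$ for every $b\in\omega$.

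The key step is to construct $T'\se T$ of the same kind as $T$ whose total number of split levels strictly below $\max I_m+1$ is at most $k_m$ for every $m$; this forces $|[T']\rest I_m|\le 2^{k_m}$. For uniformly perfect $T$, with $S$ its infinite set of fully-splitting levels, I would select an infinite $S'\se S$ with $|S'\cap[0,\max I_m+1)|\le k_m$ (possible because $k_m\to\infty$ and $S$ is infinite) and define $T'$ by fixing, at each level in $S\setminus S'$, one of the two children at every node of $T$. For merely perfect $T$, I would use a standard fusion argument to produce a uniformly perfect subtree of $T$ whose split levels satisfy the same bound: recursively pick $\sigma'_\tau\in T$ for $\tau\in 2^{<\omega}$ so that all $\sigma'_\tau$ with $|\tau|=j$ share a common length $L_j$ chosen to lie in the appropriate $I_m$, and take $T'=\{\sigma\in 2^{<\omega}:\;(\exists\tau)(\sigma\se\sigma'_\tau)\}$.

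For the verification, fix $n\in\omega$ and consider $x=a+y_1+\dots+y_n$ with $a\in A$ and $y_i\in[T']$. Since $a\rest I_m\in J_m$ for infinitely many $m$, one has $x\rest I_m\in\widetilde{J}_m:=J_m+\{(y_1+\dots+y_n)\rest I_m:\;y_i\in[T']\}$ for those $m$, and $|\widetilde{J}_m|\le|J_m|\cdot(|[T']\rest I_m|)^n\le|J_m|\cdot 2^{nk_m}$. Therefore
\[
\sum_{m\in\omega}\frac{|\widetilde{J}_m|}{2^{|I_m|}}\le\sum_{m\in\omega}(2^n)^{k_m}a_m<2^{n^2}s<\infty,
\]
which shows that $A+[T']+\dots+[T']$ is small.

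The main obstacle I expect is not the final estimate but the requirement that a \emph{single} $T'$ must work for every $n$ simultaneously. A direct geometric bundling of intervals (as in the meager perfect proof) would produce a per-interval bound growing like $2^{n^2}$ and fail to be summable; Lemma \ref{lemat analityczny} is exactly what avoids this by providing a thinning of the split-level distribution for which the factor $(2^n)^{k_m}$ is absorbed into the summable quantity $2^{n^2}s$ uniformly in $n$.
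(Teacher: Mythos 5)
Your proposal is correct and follows essentially the same route as the paper: apply Lemma \ref{lemat analityczny} to the weights $a_m=|J_m|/2^{|I_m|}$, thin $T$ to a subtree $T'$ with at most $k_m$ splitting nodes below $\max I_m$ so that $|T'\rest I_m|\le 2^{k_m}$, and bound the translated pattern sets by $(2^n)^{k_m}a_m$, summable uniformly in $n$. The extra detail you give on actually constructing $T'$ (sparse split levels, fusion for the merely perfect case) is a welcome elaboration of a step the paper only asserts.
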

		\begin{proof}
		  
		  Let $F$ be a small set with associated partition $\{I_n:\; n\in\w\}$ of $\w$ into intervals and allowed sets of sequences $\{J_n:\; n\in\w\}$. Let $T$ be a perfect tree (the proof for a uniformly perfect tree is similar). For $a_n=\frac{|J_n|}{2^{|I_n|}}$ let $(k_n:\; n\in\w)$ be as in the Lemma \ref{lemat analityczny}. We can find a perfect tree $T'\se T$ such that for each $\sigma\in T'$ with $\dom(\sigma)=\max I_n$ the set $\{\tau\in \splitt (T'):\; \tau\se\sigma\}$ has cardinality at most $k_n$. Notice that $T'\rest I_n=\{\sigma\rest I_n:\; \sigma\in T'\}$ has at most $2^{k_n}$ elements. Set for any $b\in\w$
		  \[
		    J^b_n=J_n+\underbrace{(T'\rest I_n)+(T'\rest I_n)+\dots+(T'\rest I_n)}_{b\ti{--times}}.
		  \]
		  Hence, for every $b\in\w$
		  \[
		    A+\underbrace{[T']+[T']+...+[T']}_{b\ti{--times}}\se \{x\in 2^\omega: (\existinfty n\in\omega)(x\rest I_n\in J^b_n)\}.
		  \]
		  See that $\frac{|J^b_n|}{2^{|I_n|}}\le a_n(2^{k_n})^b$ and thanks to Lemma \ref{lemat analityczny} the series $\sum_{n\in\omega}\frac{|J^b_n|}{2^{|I_n|}}$ is convergent for every $b\in\omega$. Therefore $A+\underbrace{[T']+[T']+...+[T']}_{b\ti{--times}}$ is small for every $b\in\w$.
		\end{proof}
		
		Arguments similar to the ones used in Theorem \ref{Null Silver} yield the following result.
		
		\begin{corollary}
		  For every null set $A\in 2^\omega$ and every (uniformly) perfect tree $T$ there exists (uniformly) perfect tree $T'\se T$ such that
		  \[
				A+\underbrace{[T']+[T']+...+[T']}_{n\ti{--times}}\in \Nn.
		  \]
		\end{corollary}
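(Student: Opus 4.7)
The plan is to imitate the proof of Theorem \ref{Null Silver}, with Theorem \ref{null perfect} playing the role of Lemma \ref{lemma for small sets}. The key fact being exploited is that, even when we insist the partition into ``intervals'' really be an interval partition, every null subset of $2^\omega$ is still a union of two small sets (\cite[Theorem 2.5.7]{BarJu}).

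Concretely, given a null set $A \se 2^\omega$ and a (uniformly) perfect tree $T$, first write $A = A_1 \cup A_2$ with $A_1, A_2$ small. Apply Theorem \ref{null perfect} to $A_1$ and $T$ to obtain a (uniformly) perfect tree $T' \se T$ such that
\[
  A_1+\underbrace{[T']+[T']+\dots+[T']}_{n\ti{--times}}\text{ is small for every }n\in\w.
\]
Now apply Theorem \ref{null perfect} a second time, to $A_2$ and $T'$, to obtain a (uniformly) perfect tree $T'' \se T'$ such that the analogous sums with $A_2$ and $[T'']$ are small for every $n$. Since $T'' \se T'$, monotonicity gives $[T''] \se [T']$, and hence
\[
  A_1+\underbrace{[T'']+[T'']+\dots+[T'']}_{n\ti{--times}} \se A_1+\underbrace{[T']+[T']+\dots+[T']}_{n\ti{--times}},
\]
which is still small.

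Finally, for every $n \in \w$,
\[
  A+\underbrace{[T'']+\dots+[T'']}_{n\ti{--times}} \se \Bigl(A_1+\underbrace{[T'']+\dots+[T'']}_{n\ti{--times}}\Bigr) \cup \Bigl(A_2+\underbrace{[T'']+\dots+[T'']}_{n\ti{--times}}\Bigr),
\]
which is a union of two small sets, hence in $\Nn$. There is really no obstacle here; the only point to verify is the (trivial) monotonicity of the algebraic sum under inclusion of $[T'']$ into $[T']$, which is what lets a single second refinement handle both pieces of the small-set decomposition simultaneously, exactly as in the Silver case.
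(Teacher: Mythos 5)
Your proof is correct and is exactly what the paper intends: the paper dismisses this corollary with the remark that ``arguments similar to the ones used in Theorem \ref{Null Silver} yield the result,'' and your argument is precisely that — decompose the null set into two small sets, apply Theorem \ref{null perfect} twice with a tree refinement in between, and use monotonicity of the algebraic sum to conclude. Nothing to add.
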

		
		Relaying on the above result we can obtain the thesis of Theorem \ref{Luziny t0} regarding $u_0$.
		\begin{corollary}
		  Let $\c$ be a regular cardinal. Then for every generalized Luzin set $L$ and generalized Sierpiński set $S$ we have $L+S\in u_0$.
		\end{corollary}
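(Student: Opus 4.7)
I would follow the proof template of Theorem~\ref{Luziny t0} (the $s_0$ case from \cite{MiZeb,MiRalZebNon}) and of its Silver-tree analogue Corollary~\ref{coro v0}, with the just-proved Corollary on algebraic sums of null sets with bodies of uniformly perfect trees replacing the Sacks-tree input. Fix a uniformly perfect tree $T$; the goal is a uniformly perfect $T'\subseteq T$ with $[T']\cap(L+S)=\emptyset$, which, since $+$ is an involution on $2^\omega$, is equivalent to $([T']+L)\cap S=\emptyset$.

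First, I apply the preceding Corollary with any fixed null set (say $\{\mathbb{0}\}$) to obtain a uniformly perfect $T_0\subseteq T$ with $[T_0]\in\mathcal{N}$; being closed and null, $[T_0]$ actually lies in $\mathcal{E}\subseteq\mathcal{M}\cap\mathcal{N}$. For each $l\in L$ the translate $[T_0]+l$ is again null, so the generalized Sierpi\'nski property gives $|S_l|<\mathfrak{c}$ where $S_l:=S\cap([T_0]+l)$. Next, using the regularity of $\mathfrak{c}$, I enumerate $L=\{l_\alpha:\alpha<\mathfrak{c}\}$ and perform a transfinite construction of a $\subseteq$-decreasing chain $T_0\supseteq T_1\supseteq\cdots\supseteq T_\alpha\supseteq\cdots$ of uniformly perfect trees: at each successor stage $\alpha+1$, prune $T_\alpha$ to a uniformly perfect $T_{\alpha+1}$ whose body avoids the $<\mathfrak{c}$-sized set $l_\alpha+S_{l_\alpha}\subseteq[T_0]$; at limit stages take a fusion-style limit. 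Letting $T'$ be the outcome of this fusion, if $t=l_\alpha+s\in[T']\cap(L+S)$, then $s\in[T']+l_\alpha\subseteq[T_0]+l_\alpha$, hence $s\in S_{l_\alpha}$ and $t\in l_\alpha+S_{l_\alpha}$; this contradicts $[T_{\alpha+1}]\supseteq[T']$ being disjoint from $l_\alpha+S_{l_\alpha}$.

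The main technical obstacle is the $\mathfrak{c}$-length fusion inside the class of uniformly perfect trees. Successor steps require pruning a uniformly perfect tree to a uniformly perfect subtree while excising a prescribed set of $<\mathfrak{c}$-many branches, all while respecting the rigid ``all-or-nothing splitting at each level'' condition; limit steps require verifying that the fusion limit remains uniformly perfect. The regularity of $\mathfrak{c}$ is indispensable here, both for bounding the cardinalities that accumulate through the construction and for supporting the split-level bookkeeping that keeps the fusion limit inside the uniformly perfect class.
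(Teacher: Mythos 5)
Your high-level plan is the same as the paper's: invoke the immediately preceding corollary (algebraic sum of a null set with bodies of uniformly perfect trees stays null), reduce to finding, for a given uniformly perfect $T$, a uniformly perfect $T'\subseteq T$ whose body misses $L+S$, and use the regularity of $\c$ together with the defining smallness property of a generalized Sierpi\'nski set. This matches the paper's one-line appeal to the template of Theorem~\ref{Luziny t0} from the cited works.

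However, the step you call ``the main technical obstacle'' is in fact a genuine gap, not a routine verification, and regularity of $\c$ does not resolve it. You propose a decreasing $\c$-chain $T_0\supseteq T_1\supseteq\cdots$ of uniformly perfect trees, pruning at stage $\alpha+1$ to avoid the $<\c$-sized set $l_\alpha+S_{l_\alpha}\subseteq[T_0]$ and ``taking a fusion-style limit'' at limit stages. Standard fusion for Sacks-type trees works for length $\omega$: at step $n$ one freezes the $n$-th splitting level, and after $\omega$ steps the intersection is still a perfect (or uniformly perfect) tree. In a sequence of length $\c>\omega$, the freezing indices live in $\omega$ and therefore stabilize at some countable stage $\delta$, after which the tree can no longer be shrunk; the requirements $l_\alpha+S_{l_\alpha}$ for $\alpha\geq\delta$ are then simply unaccounted for. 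Regularity of $\c$ concerns cofinality, not fusion, and offers no help with this. Nor can one sidestep the fusion by avoiding everything at once: the set $\bigcup_{\alpha<\c}\bigl(l_\alpha+S_{l_\alpha}\bigr)=[T_0]\cap(L+S)$ consists of $\c$-many pieces each of size $<\c$, so it may well have full size $\c$, and a single pruning cannot evade a $\c$-sized subset of $[T_0]$. The argument actually needed here --- whatever the precise form it takes in \cite{MiZeb} and \cite{MiRalZebNon} --- must exploit further structure (e.g.\ the $n$-fold sum control $[T_0]+[T_0]+\cdots+[T_0]\in\Nn$ supplied by the corollary, or a subtler interplay between the $\Mm$-Luzin property of $L$ and the $\Nn$-Luzin property of $S$ to show the bad set inside $[T_0]$ is in fact small) before any shrinking step is taken. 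Your sketch does not supply that, so as written it does not establish the corollary.
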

	  
	  Similarly to Theorem \ref{meager splitting} we have a weaker result in the case of $\Nn$ and splitting trees.
	  \begin{theorem}\label{Splitting null}
			For every null set $F\se 2^\omega$ there is a splitting tree $T$ such that for every $n\in\omega$
			\[
				F+\underbrace{[T]+[T]+...+[T]}_{n\ti{--times}}\in\Nn.
			\]
		\end{theorem}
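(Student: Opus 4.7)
The plan is to adapt the approach of Theorem \ref{meager splitting} by combining it with the analytic machinery behind Theorem \ref{null perfect}, in particular Lemma \ref{lemat analityczny}. I would begin by invoking the decomposition from \cite[Theorem 2.5.7]{BarJu}: the null set $F$ is contained in $F_1 \cup F_2$ where each $F_i$ is small, so there are partitions $\{I^i_m\}$ of $\omega$ into intervals and collections $\{J^i_m\}$ with $J^i_m \se 2^{I^i_m}$, satisfying $\sum_m |J^i_m|/2^{|I^i_m|} < \infty$ and $F_i \se \{x : (\existinfty m)(x\rest I^i_m \in J^i_m)\}$. The reformulated goal is to construct a single splitting tree $T$ so that $F_i + b[T]$ is small for $i = 1, 2$ and every $b \in \omega$; the conclusion then follows because $F + b[T]$ is contained in a union of two small sets, hence null.

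I would next produce a partition $\{K_n\}$ of $\omega$ into intervals that acts as a common coarsening of both $\{I^1_m\}$ and $\{I^2_m\}$, so that each $K_n$ is simultaneously a union of consecutive intervals from both partitions. Exact boundary alignment is generally impossible, but at the cost of absorbing at most one straddling interval per block into enlarged pattern sets (a bounded inflation factor), each $F_i$ becomes small with respect to $\{K_n\}$ and modified patterns $\tilde J^i_n$, with $a_n = (|\tilde J^1_n| + |\tilde J^2_n|)/2^{|K_n|}$ still summable. Applying Lemma \ref{lemat analityczny} to this $a_n$ yields a nondecreasing sequence $(k_n)_{n \in \omega}$, which without loss of generality is positive, such that $\sum_n (2^b)^{k_n} a_n < \infty$ for every $b \in \omega$.

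Finally, I would define $T$ as the product tree $[T] = \{x \in 2^\omega : (\forall n \in \omega)(x\rest K_n \in P_n)\}$, where $P_n \se 2^{K_n}$ is chosen with $|P_n| \le 2^{k_n}$ and with $\mbb{0}\rest K_n, \mbb{1}\rest K_n \in P_n$. Because the all-zero and all-one patterns belong to each $P_n$, the tree $T$ is splitting: for any $\sigma \in T$ and any coordinate $p$ lying in a block $K_n$ strictly past $\dom(\sigma)$, selecting the appropriate pattern on $K_n$ extends $\sigma$ to a branch realizing either value at $p$. The bound $|[T]\rest K_n| \le 2^{k_n}$ gives, for each $b$, the inclusion $F_i + b[T] \se \{x : (\existinfty n)(x\rest K_n \in \tilde J^i_n + b(T\rest K_n))\}$, and $|\tilde J^i_n + b(T\rest K_n)| \le |\tilde J^i_n| \cdot 2^{bk_n}$, so the summability $\sum_n |\tilde J^i_n| \cdot 2^{bk_n}/2^{|K_n|} < \infty$ from Lemma \ref{lemat analityczny} confirms smallness. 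The main obstacle is the alignment step: bookkeeping for straddling intervals must keep the pattern inflation factors bounded, so that summability is preserved uniformly in both partitions.
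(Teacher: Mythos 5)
Your overall strategy matches the paper's: decompose $F$ into two small sets $F_1, F_2$, build a block-structured tree $T$ out of all-zero and all-one patterns, and bound the inflation of the pattern sets under $b$-fold sums. However, there are two points where your route diverges, one of which is a genuine gap.

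First, invoking Lemma \ref{lemat analityczny} is unnecessary and obscures why the result holds. With $P_n = \{\mbb{0}\rest K_n, \mbb{1}\rest K_n\}$ the tree is already splitting, and crucially the $b$-fold sum $\underbrace{[T]+\dots+[T]}_{b}$ restricted to any block still takes only the two values $\mbb{0}$ and $\mbb{1}$ (sums of constant blocks mod $2$ remain constant), \emph{independently of $b$}. So the inflation factor on each block is an absolute constant, and the summability condition is inherited trivially, with no need for a slowly growing sequence $(k_n)$. The paper exploits exactly this ``Silver-like'' constancy (enriched by a thin set $A$ to allow one free coordinate per block), which is why it needs no analytic lemma at all. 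Your more general $|P_n| \le 2^{k_n}$ scheme is not wrong, but $k_n$ serves no purpose once you realize $P_n = \{\mbb{0},\mbb{1}\}$ suffices.

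Second, and more seriously, the alignment step is where your proposal has a real gap. You propose a common \emph{coarsening} $\{K_n\}$ of $\{I^1_m\}$ and $\{I^2_m\}$ and claim that straddling intervals can be absorbed ``into enlarged pattern sets (a bounded inflation factor).'' This is not justified and in fact fails as stated: if a straddling interval $I^2_m$ meets $K_n$ in a short piece, the set of $\sigma \in 2^{K_n}$ consistent with some $\tau \in J^2_m$ on $I^2_m \cap K_n$ has relative density $|J^2_m \rest (I^2_m \cap K_n)| / 2^{|I^2_m \cap K_n|}$, which can be close to $1$ rather than close to $|J^2_m|/2^{|I^2_m|}$. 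So the inflation per straddling interval is not bounded, and the summability of $a_n$ can be destroyed. The paper avoids this by going the other way: it invokes \cite[Theorem 2.5.8]{BarJu} to assume the two partitions interlock (each $I^j_n$ meets at most two intervals of the other), then takes a common \emph{refinement} $\{K_n\}$ so that each $I^j_n$ is exactly a union of two consecutive $K$-blocks. Then the tree is built on $K$-blocks, and the inflation on each $I^j_n$ is bounded by a fixed power of the per-block constant. You would need to replace your coarsening-plus-absorption step by this refinement-plus-interlocking argument (or prove a careful coarsening lemma) for the proof to go through.
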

		\begin{proof} 
			Let $F_1, F_2$ be small sets such that $F=F_1\cup F_2$. For $j=1,2$ let  $\{I^j_n:\; n\in\w\}$  be  partitions of $\w$ into intervals and let $\{J^j_n:\; n\in\w\}$ be sets of allowed sequences associated with $F_j$. Without loss of generality (see \cite[Theorem 2.5.8]{BarJu}) we can assume that for every $j$ $I_n^j\cap I_m^{3-j}\ne\0$ for at most two $m$'s. Let $A$ be an  infinite subset of $\w$ such that $|A\cap I^j_n|\le 1$ for every $n\in\w, j\in\{1,2\}$. Let $\{K_n:\; n\in\omega\}$ be a partition of $\w$ into intervals such that
			\begin{align*}
			  I^1_n&=K_{2n}\cup K_{2n+1},
			  \\
			  I^2_n&=K_{2n+1}\cup K_{2n+2}.
			\end{align*}
			Define
			\[
			T=\{\sigma\in 2^{<\omega}:\; (\forall n\in\w)(\sigma\rest (K_n\bez A)\se\mbb{0}\rest (K_n\bez A) \lor \sigma\rest (K_n\bez A)\se\mbb{1}\rest (K_n\bez A))\}
			\]
			and notice that $T$ is a splitting tree. Set
			\begin{align*}
			  {J^1_n}'&=J^1_n+\{x\rest K_{2n}\cup y\rest K_{2n+1}:\; x,y\in\{\mbb{0}, \mbb{1}\}\}+\{\mbb{0}\rest I^1_n, \chi_A\rest I^1_n\},
			  \\
			  {J^2_n}'&=J^2_n+\{x\rest K_{2n+1}\cup y\rest K_{2n+2}:\; x,y\in\{\mbb{0}, \mbb{1}\}\}+\{\mbb{0}\rest I^2_n, \chi_A\rest I^2_n\}.
			\end{align*}
			For $j=1,2$ let
			\[
			  F_j'=\{x\in 2^\w:\; (\existinfty n\in\w)(x\rest I^j_n\in {J^j_n}')\}.
			\]
			Clearly these sets are small and 
			\[
				F_j+\underbrace{[T]+[T]+...+[T]}_{n\ti{--times}}\se F_j'.
			\]
			Hence, the proof is complete.
		\end{proof}
		
		Again, similarly to the case of $\Mm$, we may pose the following question.
		\begin{problem}
		  Let $F$ be a null subset of $2^\w$. Is it true that for every splitting tree $T$ there exists a splitting tree $T'\se T$ such that
		  \[
		    F+\underbrace{[T']+[T']+...+[T']}_{\tn{n-times}}\in\Nn?
		  \]
		\end{problem}
	  
	\subsection*{Intersection of measure and category}
	
	  The above theorems for measure and category allow us to obtain the following result.
	
	  \begin{theorem}
	    Let $F\in\Mm\cap \Nn$ and let $T$ be a perfect (resp. uniformly perfect or Silver) tree. Then there is a perfect (resp. uniformly perfect or Silver) tree $T'\se T$ such that
	    \[
	      F+\underbrace{[T']+[T']+...+[T']}_{n\ti{--times}}\in \Mm\cap\Nn.
	    \]
	  \end{theorem}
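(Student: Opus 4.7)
The plan is to chain together the two previous families of results (category first, then measure) and exploit the monotonicity of the operation $F + n[\,\cdot\,]$ under subtree inclusion. Concretely, I would proceed in two steps: first I would feed the meager set $F$ into the category theorem for the appropriate tree class to obtain an intermediate subtree $T_1 \subseteq T$, and then I would feed the null set $F$ together with $T_1$ into the measure theorem to extract a further subtree $T' \subseteq T_1$.

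Step 1: Apply the category theorem of the appropriate type. If $T$ is a Silver tree, use Theorem~\ref{meager Silver}; if $T$ is (uniformly) perfect, use the corresponding meager-case theorem. Since $F \in \Mm\cap\Nn \subseteq \Mm$, each of these produces a subtree $T_1 \subseteq T$ of the same kind as $T$ such that
\[
F + \underbrace{[T_1]+[T_1]+\dots+[T_1]}_{n\ti{--times}} \in \Mm \quad\text{for every } n\in\omega.
\]

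Step 2: Apply the measure theorem of the same type to $F$ (as a null set) and the tree $T_1$. For Silver trees use Theorem~\ref{Null Silver}; for (uniformly) perfect trees use the corollary following Theorem~\ref{null perfect}. This yields a subtree $T' \subseteq T_1$ of the same kind as $T_1$ (hence as $T$) such that
\[
F + \underbrace{[T']+[T']+\dots+[T']}_{n\ti{--times}} \in \Nn \quad\text{for every } n\in\omega.
\]

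Step 3: Observe that $T' \subseteq T_1$ implies $[T'] \subseteq [T_1]$, and therefore
\[
F + \underbrace{[T']+\dots+[T']}_{n\ti{--times}} \se F + \underbrace{[T_1]+\dots+[T_1]}_{n\ti{--times}} \in \Mm.
\]
Combined with Step 2, this gives $F + n[T'] \in \Mm \cap \Nn$ for every $n$, as required. There is essentially no obstacle here: the proof is a two-line reduction to the already-proved theorems, and the only subtlety worth flagging is precisely this monotonicity remark, which guarantees that refining $T_1$ to $T'$ in Step 2 does not spoil the meagerness secured in Step 1.
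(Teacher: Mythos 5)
Your proposal is correct and is exactly the argument the paper intends, since the paper simply remarks that the proof is ``similar to the proof of Theorem~\ref{Null Silver}'' --- i.e., chain the category theorem and the measure theorem and use that passing to a subtree preserves the already-secured ideal membership. You make the monotonicity step (that $T'\subseteq T_1$ implies $F+n[T']\subseteq F+n[T_1]$) explicit, which the paper leaves implicit.
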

	  
	  The proof is similar to the proof of Theorem \ref{Null Silver}.
	
	\subsection*{Closed measure zero case}
	
		The following can be deciphered from the characterization of $\Ee$ from \cite[Lemma 2.6.3]{BarJu}.
		
		\begin{lemma}
		  For every set $E\in \Ee$ there is a partition $\{I_n:\; n\in\omega\}$ of $\omega$ to intervals and sets of finite sequences $J_n\se 2^{I_n}$, $n\in \omega$, such that $\sum_{n\in\omega} \frac{1}{2^{|I_n|}}<\infty$, for each $n\in\omega$ $\frac{|J_n|}{2^{I_n}}\le \frac{1}{2^n}$, and
		  \[
		    E\se \{x\in 2^\omega:\; (\foralmostall n)(x\rest I_n\in J_n)\}.
		  \]
		\end{lemma}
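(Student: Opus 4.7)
The plan is to reduce the problem to the pointwise characterization of a single closed null set and then merge countably many instances into one partition. Since $\Ee$ is generated by closed null sets, write $E=\bigcup_{k\in\omega}C_k$ with each $C_k$ closed and null. The standard content of Bartoszyński--Judah Lemma 2.6.3 applied to each $C_k$ yields a partition $\{I^k_j:j\in\omega\}$ of $\omega$ into intervals and sets $J^k_j\se 2^{I^k_j}$ with $|J^k_j|/2^{|I^k_j|}\to 0$ and
\[
C_k\se\{x\in 2^\omega:(\foralmostall j)(x\rest I^k_j\in J^k_j)\}.
\]
By grouping consecutive intervals of the $k$-th partition into blocks and replacing the corresponding families of patterns by their Cartesian products, I preserve the above inclusion (the new condition is a strengthening of the old one) while multiplying densities, so I can arrange $|I^k_j|\ge k+j+2$ and $|J^k_j|/2^{|I^k_j|}\le 2^{-(k+j+2)}$ for all $k,j$.

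Next I build the final partition $\{I_n\}$ inductively. Take $I_n=[a_n,a_{n+1})$ with $a_{n+1}$ chosen so large that $|I_n|\ge n+1$ and, for every $k\le n$, at least one interval $I^k_j$ with $j\ge n$ sits entirely inside $I_n$. Then define
\[
J_n=\bigcup_{k\le n}\bigl\{y\in 2^{I_n}:y\rest I^k_j\in J^k_j\text{ for every }I^k_j\se I_n\bigr\}.
\]
For $x\in E$, pick any $k$ with $x\in C_k$; for all sufficiently large $n$, every $I^k_j\se I_n$ has $j$ past the threshold coming from the normal form for $C_k$, hence $x\rest I_n$ belongs to the $k$-th summand in the definition of $J_n$. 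This gives $E\se\{x:(\foralmostall n)(x\rest I_n\in J_n)\}$.

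It remains to verify the size bounds. For each $k\le n$, the corresponding summand in $J_n$ has relative size equal to the product $\prod_{I^k_j\se I_n}|J^k_j|/2^{|I^k_j|}$; since at least one factor in this product has $j\ge n$, it is bounded by $2^{-(k+n+2)}$. Summing over $k\le n$ gives $|J_n|/2^{|I_n|}\le \sum_{k\le n}2^{-(k+n+2)}\le 2^{-n}$. Finally, $|I_n|\ge n+1$ immediately yields $\sum_n 2^{-|I_n|}<\infty$. The main obstacle I anticipate is exactly the bookkeeping in the merging step: keeping the density of $J_n$ at most $2^{-n}$ while still forcing every $C_k$ with $k\le n$ to contribute to it. The geometric budget $2^{-(k+j+2)}$ secured by the initial coarsening is what makes this work, since one tiny factor per $k$ suffices to shrink the $k$-th summand below $2^{-(k+n+2)}$, leaving enough room to sum over $k\le n$.
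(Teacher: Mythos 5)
Your argument is correct. Note that the paper itself gives no proof of this lemma: it is stated as something that ``can be deciphered from'' the characterization of $\Ee$ in Bartoszyński--Judah (Lemma 2.6.3), so your write-up is in effect the missing deciphering. Your route — decompose $E=\bigcup_k C_k$ into closed null sets, put each $C_k$ into the interval-plus-patterns normal form, coarsen each partition by blocking so the densities decay geometrically (legitimate, since densities multiply over a block of disjoint intervals and the blocked condition follows from the original $\foralmostall$ condition), then merge diagonally with each $I_n$ long enough to wholly contain one interval of index at least $n$ from each of the first $n+1$ partitions — is sound, and the two key estimates check out: the $k$-th summand of $J_n$ has relative size $\prod_{I^k_j\se I_n}|J^k_j|/2^{|I^k_j|}\le 2^{-(k+n+2)}$ because one factor has index $\ge n$ and all factors are at most $1$, so $|J_n|/2^{|I_n|}\le\sum_{k\le n}2^{-(k+n+2)}\le 2^{-n}$, while $|I_n|\ge n+1$ gives $\sum_n 2^{-|I_n|}<\infty$; the coverage argument also works, since for $x\in C_k$ and $n$ large every $I^k_j\se I_n$ has index beyond the threshold for $x$ because $\min I^k_j\ge a_n\to\infty$, and your construction guarantees the $k$-th summand is never vacuous for $k\le n$ (which is what protects the density bound). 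The only ingredient left implicit is the normal form for a single closed null set, which is exactly the compactness construction the paper carries out anyway in its proof of the subsequent lemma (the version with density at most $\frac12$); so compared with the paper's bare citation, your proof is longer but self-contained, and it makes explicit the regrouping needed to achieve the specific bounds $\sum_n 2^{-|I_n|}<\infty$ and $|J_n|/2^{|I_n|}\le 2^{-n}$.
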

		
		Below is a simpler characterization of $\Ee$.
		
		\begin{lemma}
		  For every set $E\in \Ee$ there is a partition $\{I_n:\; n\in\omega\}$ of $\omega$ to intervals and sets of finite sequences $J_n\se 2^{I_n}$, $n\in \omega$, such that $\frac{|J_n|}{2^{|I_n|}}\leq \frac{1}{2}$ for each $n\in\omega$ and
		  \[
		    E\se \{x\in 2^\omega:\; (\foralmostall n)(x\rest I_n\in J_n)\}.
		  \]
		\end{lemma}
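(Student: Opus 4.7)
The plan is to derive this directly from the preceding (stronger) characterization of $\Ee$, since the new conclusion is essentially a weakening. Given $E\in\Ee$, I apply the preceding lemma to obtain a partition $\{I_n:n\in\omega\}$ of $\omega$ into intervals together with sets $J_n\subseteq 2^{I_n}$ satisfying $\sum_{n}\frac{1}{2^{|I_n|}}<\infty$, $\frac{|J_n|}{2^{|I_n|}}\leq\frac{1}{2^n}$ for every $n$, and
\[
E\subseteq\{x\in 2^\omega:(\foralmostall n)(x\restricted I_n\in J_n)\}.
\]

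The key observation is that the new bound $\frac{1}{2}$ is automatically weaker than the old bound $\frac{1}{2^n}$ for every $n\geq 1$. Thus, keeping the same partition $\{I_n\}$ and setting $J_n':=J_n$ for $n\geq 1$, we already have $\frac{|J_n'|}{2^{|I_n|}}\leq\frac{1}{2^n}\leq\frac{1}{2}$. The only potentially problematic index is $n=0$, where the old lemma guarantees only $\frac{|J_0|}{2^{|I_0|}}\leq 1$. I handle this by redefining $J_0':=\emptyset$, which certainly satisfies $\frac{|J_0'|}{2^{|I_0|}}=0\leq\frac{1}{2}$.

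Since the membership relation $(\foralmostall n)(x\restricted I_n\in J_n')$ is insensitive to any finite modification of the sequence $(J_n')_{n\in\omega}$, we obtain
\[
\{x\in 2^\omega:(\foralmostall n)(x\restricted I_n\in J_n)\}=\{x\in 2^\omega:(\foralmostall n)(x\restricted I_n\in J_n')\},
\]
so $E$ remains contained in the right-hand side. Hence $(\{I_n\},\{J_n'\})$ is the desired witness. There is no real obstacle here; the entire content of the lemma is the observation that the rapid decay $1/2^n$ from the previous lemma is more than sufficient to give the uniform bound $1/2$, and that the summability condition $\sum 1/2^{|I_n|}<\infty$ can simply be discarded from the statement since it plays no role in the weaker conclusion.
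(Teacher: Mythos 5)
Your proof is correct, but it takes a genuinely different route from the paper's. You observe, quite rightly, that the preceding lemma already gives $\frac{|J_n|}{2^{|I_n|}}\leq\frac{1}{2^n}\leq\frac{1}{2}$ for all $n\geq 1$, and that the only troublesome index $n=0$ (where the bound is merely $\leq 1$) can be fixed by replacing $J_0$ with $\emptyset$, since the $\foralmostall$ quantifier is unaffected by finite modifications. The summability hypothesis on $\sum 1/2^{|I_n|}$ is simply dropped because the weaker conclusion never uses it. All of this is sound. The paper instead gives a \emph{self-contained} inductive construction directly from the definition of $\Ee$: writing $E\subseteq\bigcup_n F_n$ with $F_n$ closed and null, it covers each thickened set $\widetilde{F}_{n+1}=F_{n+1}+2^{\bigcup_{k\leq n}I_k}$ by an open set of measure $\leq 1/2$, extracts a finite subcover by compactness, and reads off $I_{n+1}$ and $J_{n+1}$ from that cover. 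What your approach buys is brevity — the conclusion is immediate once the stronger lemma is accepted. What the paper's approach buys is independence: the preceding lemma is only ``deciphered'' from \cite[Lemma 2.6.3]{BarJu} and is not proven in this paper, so the direct construction makes the simpler characterization stand on its own without leaning on that citation.
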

		\begin{proof}
		  Let $E\in\Ee$ and assume that $E\se \bigcup_{n\in\omega}F_n$, where $(F_n:\; n\in\omega)$ is an ascending sequence of closed null subsets of $2^\omega$. We will construct desired partition and sets of finite sequences inductively.
		  
		  Step $0$. Since $F_0$ is closed and null, there is an open set $U_0\es F_0$ satisfying $\lambda(U_0)<\frac{1}{2}$. Then there are finite sequences $\sigma^0_{k}\in 2^{<\omega}$, $k\in \omega$, such that $U_0=\bigcup_{k\in\omega}[\sigma^0_k]$. $F_0$ is compact, therefore there is $k_0\in\omega$ such that $F_0\se \bigcup_{k<k_0}[\sigma^0_k]$. Set $I_0=\bigcup_{k<k_0}\dom(\sigma^0_k)$ and $J_0=\{\sigma\in 2^{I_0}:\; (\exists k<k_0)(\sigma^0_k\se \sigma)\}$. Clearly, $\frac{|J_0|}{2^{|I_0|}}= \lambda(\bigcup_{k<k_0}[\sigma^0_k])<\frac{1}{2}$.
		  
		  At the step $n+1$ consider the set 
		  \[
		    \widetilde{F}_{n+1}=\{x\in 2^\omega:\; (\exists y\in F_{n+1}) (x\rest (\omega\bez \bigcup_{k\le n}I_k)=y\rest(\omega\bez \bigcup_{k\le n}I_k)\}).
		  \]
		See that $\widetilde{F}_{n+1}=F_{n+1}+2^{\bigcup_{k\le n}I_k}$, hence it is compact and null. There exists open $U_{n+1}\es \widetilde{F}_{n+1}$, $\lambda(U_{n+1})\le\frac{1}{2}$, and finite sequences $\sigma^{n+1}_{k}\in 2^{<\omega}$, $k\in \omega$, such that $U_{n+1}\se\bigcup_{k\in\omega}[\sigma^{n+1}_k]$. By compactness of $\widetilde{F}_{n+1}$ there is $k_{n+1}\in\omega$ such that $\widetilde{F}_{n+1}\se\bigcup_{k<k_{n+1}}[\sigma^{n+1}_{k}]$. Set 
		\begin{align*}
		  I_{n+1}&=\bigcup_{k<k_{n+1}}\dom(\sigma^{n+1}_k)\bez\bigcup_{k\le n}I_{k},
		  \\
		  J_{n+1}&=\{\sigma\in 2^{I_{n+1}}:\; (\exists k<k_{n+1})(\sigma^{n+1}_k\rest I_{n+1}=\sigma\rest I_{n+1})\}.
		\end{align*}
		Note that $\frac{|J_{n+1}|}{2^{|I_{n+1}|}}\le \lambda(U_{n+1})=\frac{1}{2}$. This completes the construction.
		
		To see that $E\se \{x\in 2^\omega:\; (\foralmostall n)(x\rest I_n\in J_n)\}$ let $x\in E$. Then there is $n_0\in\omega$ such that $x\in F_{n}$ for $n\ge n_0$. Then for all $n$ $x\in U_n\se \bigcup_{k<k_n}[\sigma^{n}_k]$, hence $x\rest I_n\in J_n$. The proof is complete.
    \end{proof}

    \begin{theorem}\label{Silver E}
			For every set $E\in\Ee$ and every Silver tree $T$ there is a Silver tree $T'\se T$ such that for every $n\in\omega$
			\[
				E+\underbrace{[T']+[T']+...+[T']}_{n\ti{--times}}\in\Ee.
			\]
		\end{theorem}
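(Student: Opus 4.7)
My plan is to mirror the strategy used in Theorems \ref{meager Silver} and \ref{Null Silver}, exploiting the fact that iterated sums of a Silver tree body stabilize inside $[T']\cup([T']+[T'])$, combined with the simpler characterization of $\Ee$ established just above. The main obstacle is that, unlike the characterizations of meager or small sets, the characterization of $\Ee$ carries the strict density bound $|J_n|/2^{|I_n|}\le \frac{1}{2}$, and adjoining a Silver tree term to $E$ can double the number of admissible patterns on each block, threatening to break the bound. My strategy is therefore to halve the density in advance by coarsening the partition before the Silver tree is added.

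Concretely, I would first apply the simpler characterization to fix a partition $\{I_n:\; n\in\omega\}$ of $\omega$ into intervals and sets $J_n\se 2^{I_n}$ with $|J_n|/2^{|I_n|}\le \frac{1}{2}$ witnessing $E\se\{x\in 2^\omega:\; (\foralmostall n)(x\rest I_n\in J_n)\}$. I would then group the intervals in pairs: set $K_n=I_{2n}\cup I_{2n+1}$ and
\[
L_n=\{\sigma\in 2^{K_n}:\; \sigma\rest I_{2n}\in J_{2n}\;\land\; \sigma\rest I_{2n+1}\in J_{2n+1}\},
\]
so that $|L_n|/2^{|K_n|}\le \frac{1}{4}$ while still $E\se\{x\in 2^\omega:\; (\foralmostall n)(x\rest K_n\in L_n)\}$. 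This preparatory coarsening creates the slack needed to absorb the Silver tree contribution.

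Finally, writing $x_T, A$ for the parameters of $T$, I would pick an infinite $A'\se A$ with $|A'\cap K_n|\le 1$ for every $n$, which is possible since $A$ is infinite and each $K_n$ is finite, and let $T'$ be the Silver tree with parameters $x_T, A'$, so that $T'\se T$. For a fixed $k\in\omega$, the restriction of $\underbrace{[T']+\dots+[T']}_{k\ti{--times}}$ to $K_n$ takes at most two values: either $x_T\rest K_n$ or $\mbb{0}\rest K_n$ depending on the parity of $k$, possibly flipped at the unique position of $A'\cap K_n$. Consequently $E+\underbrace{[T']+\dots+[T']}_{k\ti{--times}}$ is contained in $\{x\in 2^\omega:\; (\foralmostall n)(x\rest K_n\in L^k_n)\}$, where each $L^k_n$ is a union of at most two translates of $L_n$ and hence satisfies $|L^k_n|/2^{|K_n|}\le 2\cdot \frac{1}{4}=\frac{1}{2}$. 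Invoking the simpler characterization of $\Ee$ then places this set in $\Ee$, completing the argument.
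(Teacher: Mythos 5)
Your proposal is correct and follows essentially the same route as the paper's proof: coarsen the partition to gain density slack, thin the parameter set $A$ to at most one point per block, and absorb the possible traces of the iterated Silver sums as finitely many translates of the allowed pattern sets. The only cosmetic difference is that you pair intervals and use the parity of $n$ to get a separate witness for each number of summands (two translates, density $2\cdot\frac14$), whereas the paper groups intervals in triples and uses the four patterns $\{\mbb{0}, x_T, \chi_{A'}, x_T+\chi_{A'}\}\rest I'_n$ to produce a single set $E'\in\Ee$ covering all $n$ simultaneously.
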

		\begin{proof}
		  Let $E\in \Ee$ with associated partition $\{I_n:\; n\in\omega\}$ of $\w$ into intervals, sets of sequences $J_n\se 2^{I_n}$, $n\in\w$ and let $T$ be a Silver tree with parameters $x_T\in 2^\w$ and $A\in [\w]^\w$. Set
		  \begin{align*}
		    I'_{n}&=I_{3n}\cup I_{3n+1}\cup I_{3n+2},
		    \\
		    A'&=\{a\in A:\; (\forall n\in\w)(a\in I'_n\to \neg (\exists b<a)(b\in I'_n\cap A))\},
		    \\
		    J'_n&=(J_{3n}\times J_{3n+1}\times J_{3n+2})+\{\mbb{0}\rest I'_n, x_T\rest I'_n, \chi_{A'}\rest I'_n, (x_T+\chi_{A'})\rest I'_n\}.
		  \end{align*}
		  Notice that $|A'|=\aleph_0$ and $|A'\cap I'_n|\leq 1$. Here $\chi_{A'}$ denotes the characteristic function of $A'$, i.e. $\chi_{A'}=A'\times \{1\}\cup (\w\bez A')\times\{0\}$. Set
		  \begin{align*}
		    T'&=\{\sigma\in 2^{<\w}:\; (\forall n\in\dom(\sigma))(n\notin A'\to \sigma(n)=x_T(n))\},
		    \\
		    E'&=\{x\in 2^\w:\; (\foralmostall n\in\omega)(x\rest I'_n\in J'_n)\}.
		  \end{align*}
      Clearly $T'$ is a Silver tree and $E'\in\Ee$. To see the latter notice that
      \[
        \frac{|J'_n|}{2^{|I'_n|}}\le \frac{|J_{3n}|\cdot |J_{3n+1}|\cdot |J_{3n+2}|\cdot 4}{2^{|I_{3n}|}2^{|I_{3n+1}|}2^{|I_{3n+2}|}}\le \left(\frac{1}{2}\right)^3\cdot 4=\frac{1}{2}.
      \]
      Now, let $x\in E$. There exists $N$ such that $x\rest I_n\in J_n$ for $n\ge N$. Let $K=\lceil N/3\rceil$. Then $x\rest I'_n\in J_{3n}\times J_{3n+1}\times J_{3n+2}$ for $n\ge K$. Let $y\in \underbrace{[T']+[T']+\dots + [T']}_{k-times}$. Then for every $n\in\w$
      \[
        y\rest I'_n\in \{\mbb{0}\rest I'_n, x_T\rest I'_n, \chi_{A'}\rest I'_n, (x_T+\chi_{A'})\rest I'_n\}.
      \]
      Hence, $x+y\in E'$.
		\end{proof}
		
		\begin{theorem}
			For every set $E\in\Ee$ and every (uniformly) perfect tree $T$ there is a (uniformly) perfect tree $T'\se T$ such that for every $n\in\omega$
			\[
				E+\underbrace{[T']+[T']+...+[T']}_{n\ti{--times}}\in\Ee.
			\]
		\end{theorem}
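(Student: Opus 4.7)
The plan is to adapt the proof of Theorem \ref{null perfect} to the closed-measure-zero setting, using the sharper characterization of $\Ee$ (the first lemma of this subsection) in place of the small-set framework. The extra exponential decay $|J_n|/2^{|I_n|}\le 1/2^n$ supplied by that lemma is precisely what absorbs the growth of the $b$-fold sumset $\underbrace{[T']+\dots+[T']}_{b\ti{--times}}$.

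First I would apply the sharper lemma to $E$ to obtain a partition $\{I_n:\; n\in\w\}$ of $\w$ into intervals together with sets $J_n\se 2^{I_n}$ satisfying $|J_n|/2^{|I_n|}\le 1/2^n$ and $E\se\{x:\;(\foralmostall n)(x\rest I_n\in J_n)\}$. Fix a nondecreasing sequence $(k_n)_{n\in\w}$ with $k_n\to\infty$ but $k_n=o(n)$ (for instance $k_n=\lfloor\sqrt{n}\rfloor$), and construct, exactly as in the proof of Theorem \ref{null perfect}, a (uniformly) perfect subtree $T'\se T$ with $|T'\rest I_n|\le 2^{k_n}$ for every $n$. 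In the uniformly perfect case one thins the sequence of splitting levels of $T$ so that at most $k_n$ of them lie below $\max I_n$; in the general perfect case one picks splitting nodes $\sigma_\tau\in\splitt(T)$ indexed by $\tau\in 2^{<\w}$ in such a way that at most $k_n$ generations of splits of $T'$ occur before level $\max I_n$.

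For each $b\in\w$, set $J^b_n=J_n+\underbrace{(T'\rest I_n)+\dots+(T'\rest I_n)}_{b\ti{--times}}$ and $E^b=\{x\in 2^\w:\;(\foralmostall n)(x\rest I_n\in J^b_n)\}$. The standard sumset bound gives $|J^b_n|/2^{|I_n|}\le 2^{bk_n}\cdot 2^{-n}=2^{bk_n-n}$, which, for each fixed $b$, is bounded by $1/2$ for all sufficiently large $n$; hence $E^b\in\Ee$ by the simpler characterization of $\Ee$ (the second lemma of this subsection). Finally, if $x\in E$ and $y_1,\dots,y_b\in [T']$ then eventually $x\rest I_n\in J_n$ and each $y_i\rest I_n\in T'\rest I_n$, so $(x+y_1+\dots+y_b)\rest I_n\in J^b_n$; hence $E+\underbrace{[T']+\dots+[T']}_{b\ti{--times}}\se E^b\in\Ee$, as required.

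The main obstacle is reconciling the two interval structures in the second step: the levels $\max I_n$ come from the characterization of $E$, while the splitting levels come from $T$, and we must arrange that at most $k_n$ splits of $T'$ appear below level $\max I_n$. As in Theorem \ref{null perfect} this can be handled by first coarsening $\{I_n\}$ (replacing it by a sub-partition whose blocks are unions of consecutive $I_n$'s), which only tightens the density bound $|J_n|/2^{|I_n|}$ and so does no harm, while giving each block enough room to accommodate the required splitting nodes of $T$.
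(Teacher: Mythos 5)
Your proof is correct, but it takes a genuinely different route from the paper's. The paper works with the \emph{simpler} characterization of $\Ee$ (only $|J_n|/2^{|I_n|}\le 1/2$) and compensates by coarsening the partition very aggressively: it sets $k_{-1}=-1$, $k_n=k_{n-1}+(2^n)^{n+1}$, groups $I_{k_{n-1}+1},\dots,I_{k_n}$ into a new block $I'_n$, permits only $n$ generations of splits of $T'$ below $\max I'_n$, and then bundles \emph{all} the $j$-fold sumsets $\bigcup_{j\le n}\underbrace{(T'\rest I'_n)+\dots+(T'\rest I'_n)}_{j}$ into a single $J'_n$, obtaining one set $E'\in\Ee$ that contains every $E+\underbrace{[T']+\dots+[T']}_{j}$. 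The arithmetic is tuned so that $(1/2)^{k_n-k_{n-1}}\cdot((2^n)^{n+1}-1)=1/2$ exactly. You instead exploit the \emph{sharper} characterization, whose built-in decay $|J_n|/2^{|I_n|}\le 2^{-n}$ absorbs the factor $2^{bk_n}$ with any $k_n=o(n)$; this lets you skip the carefully tuned coarsening and also lets you treat each $b$ separately with its own $E^b$, which is perfectly adequate since $\Ee$ is a $\sigma$-ideal and the theorem only asks for membership for each fixed $n$. Two small inaccuracies in your write-up are worth flagging: (i) your reference to Theorem \ref{null perfect} for the coarsening step is misplaced --- that proof does not coarsen the partition at all (it works with the original $I_n$'s and obtains $(k_n)$ from Lemma \ref{lemat analityczny}); coarsening is what the paper does in the category-perfect and $\Ee$-perfect proofs; and (ii) the coarsening step is in fact not needed in your argument: since $T$ is perfect (respectively uniformly perfect) one can always postpone the $j$-th generation of splits of $T'$ past $\max I_n$ for every $n$ with $k_n<j$, so the required bound $|T'\rest I_n|\le 2^{k_n}$ is achievable directly for any nondecreasing unbounded $(k_n)$ without changing the intervals. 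You also implicitly use the (easy, but unstated in the paper) converse of the simpler characterization --- that a partition with $|J_n|/2^{|I_n|}\le 1/2$ for all large $n$ yields a set in $\Ee$; this is fine since modifying finitely many $J_n^b$'s does not change $\{x:(\foralmostall n)(x\rest I_n\in J^b_n)\}$ and each tail intersection is a closed null set.
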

		\begin{proof}
		  Let $E\in \Ee$ with associated partition $\{I_n:\; n\in\omega\}$ of $\w$ into intervals, sets of sequences $J_n\se 2^{I_n}$, $n\in\w$ and let $T$ be a perfect tree (the proof for a uniformly perfect is almost identical). Set a sequence of integers $(k_n: n\ge -1)$ given recursively by $k_{-1}=-1, k_n=k_{n-1}+(2^n)^{n+1}$ for $n\ge 0$. For every $n\in\w$ set
		  \[
		    I'_n=\bigcup\{I_k:\; k_{n-1}<k\le k_n\}.
		  \]
		  Let $T'\se T$ be a perfect tree such that for each $\sigma\in T', |\sigma|=\max I'_n,$ the set $\{\tau\in \splitt (T'):\; \tau\se\sigma\}$ has cardinality at most $n$. For every $n\in\w$ let
		  \[
		    J'_n=\left(\prod_{k=k_{n-1}+1}^{k_n}J_k\right)+\bigcup_{j\le n}\underbrace{\{\sigma\rest I'_n:\; \sigma\in T'\}+\dots+\{\sigma\rest I'_n:\; \sigma\in T'\}}_{j-times}.
		  \]
      Notice that
      \[
        \frac{|J'_n|}{2^{|I'_n|}}\le \left(\prod_{k=k_{n-1}+1}^{k_n}\frac{|J_k|}{2^{|I_k|}}\right)\cdot (1+(2^n)^{1}+\dots+(2^n)^{n})\le \left(\frac{1}{2}\right)^{k_n-k_{n-1}}((2^n)^{n+1}-1)=\frac{1}{2}.
      \]
      It follows that the set
      \[
        E'=\{x\in 2^\w:\; (\foralmostall n\in\omega)(x\rest I'_n\in J'_n)\}\in \Ee.
      \]
    From the definition of sets $J'_n$ it is clear that
    \[
      E+\underbrace{[T']+[T']+\dots [T']}_{j-times}\se E'
    \]
    for each $j\in\omega$.
  \end{proof}
  
  \begin{theorem}
			For every set $F\in\Ee$ there is a splitting tree $T$ such that for every $n\in\omega$
			\[
				F+\underbrace{[T]+[T]+...+[T]}_{n\ti{--times}}\in\Ee.
			\]
  \end{theorem}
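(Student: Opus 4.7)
The plan is to imitate the construction in the proof of Theorem~\ref{Splitting null}, while invoking the \emph{first} (rather than the simpler) characterization of $\Ee$ stated above, which provides the stronger density bound $|J_n|/2^{|I_n|}\le 1/2^n$. This exponential decay will absorb the bounded combinatorial blow-up per block produced by the iterated algebraic sums with $[T]$.

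Fix $F\in\Ee$ and apply the first characterization: choose a partition $\{I_n:\,n\in\w\}$ of $\w$ into intervals and $J_n\se 2^{I_n}$ with $|J_n|/2^{|I_n|}\le 1/2^n$ and $F\se\{x\in 2^\w:(\foralmostall n)(x\rest I_n\in J_n)\}$. Pick an infinite $A\se\w$ with $|A\cap I_n|\le 1$ for every $n$, and set
\[
T=\{\sigma\in 2^{<\w}:\,(\forall n\in\w)(\sigma\rest(I_n\bez A)\se\mbb{0}\rest(I_n\bez A)\lor\sigma\rest(I_n\bez A)\se\mbb{1}\rest(I_n\bez A))\}.
\]
Exactly as in Theorem~\ref{Splitting null}, $T$ is a splitting tree: given $\sigma\in T$ with $\dom(\sigma)\se\bigcup_{m\le M}I_m$, the integer $N=\max I_M$ witnesses the splitting condition, since for $n>N$ lying in some $I_m$ we can realise either value $\tau(n)=0$ or $\tau(n)=1$ via a suitable choice of constant on $I_m\bez A$ (if $n\notin A$) or of the single free bit on $I_m\cap A$ (if $n\in A$).

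The heart of the argument is the following blow-up bound that is uniform in $j$: for any $y_1,\dots,y_j\in[T]$, the restriction $(y_1+\dots+y_j)\rest I_n$ is constant on $I_n\bez A$, its value being the parity of the number of those $y_i$ whose $(I_n\bez A)$-restriction equals $\mbb{1}$, and it is arbitrary on the at most one position in $I_n\cap A$. Hence the set of such sums has at most $4$ elements for every $n$ and every $j$. Writing $J^{(j)}_n=J_n+\{(y_1+\dots+y_j)\rest I_n:\,y_1,\dots,y_j\in[T]\}\se 2^{I_n}$, we obtain
\[
F+\underbrace{[T]+[T]+\dots+[T]}_{j\ti{--times}}\se\{x\in 2^\w:\,(\foralmostall n)(x\rest I_n\in J^{(j)}_n)\},
\]
with $|J^{(j)}_n|/2^{|I_n|}\le 4|J_n|/2^{|I_n|}\le 4/2^n\le 1/2$ for every $n\ge 3$. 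The right-hand side is an increasing union over $k\ge 3$ of the closed sets $\{x:(\forall n\ge k)(x\rest I_n\in J^{(j)}_n)\}$, each of Lebesgue measure at most $\prod_{n\ge k}(4/2^n)=0$; hence $F+j[T]\in\Ee$ for every $j\in\w$.

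The only subtle point is noticing that the rigid ``constant on $I_n\bez A$ plus one free bit on $I_n\cap A$'' structure of $[T]$ is preserved under algebraic sums, so that the blow-up factor per block is the absolute constant $4$ rather than anything growing with $j$. This uniform bound is what forces the use of the first characterization of $\Ee$ over the simpler one: the block density $|J_n|/2^{|I_n|}$ must decay strictly faster than any fixed positive constant in order to still satisfy the $1/2$-density threshold after multiplication by $4$.
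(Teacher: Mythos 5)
Your proof is correct, but it follows a genuinely different route from the paper. The paper works with the simpler characterization of $\Ee$ (the second lemma, giving the constant bound $|J_n|/2^{|I_n|}\le 1/2$) and compensates for the factor-of-$4$ blow-up by grouping intervals in triples, so that the density of each block becomes $(1/2)^3\cdot 4 = 1/2$; the splitting tree is then built on the coarser partition $\{I'_n\}$. You instead reach for the first characterization with the bound $|J_n|/2^{|I_n|}\le 1/2^n$, keep the original partition, and let the exponential decay of the densities swallow the same factor of $4$. Both approaches hinge on the identical core observation — that $(y_1+\dots+y_j)\rest I_n$ is constant on $I_n\setminus A$ and free only on the at most one coordinate of $I_n\cap A$, giving at most $4$ possible restrictions uniformly in $j$ — so the difference is purely in the bookkeeping. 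Your route is slightly cleaner (no regrouping and no $A'$-refinement step), but your closing claim that the uniform bound of $4$ \emph{forces} the first characterization is not accurate: the paper's grouping trick shows the simpler characterization suffices. It is merely that you chose to trade the grouping step for the stronger hypothesis.
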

	\begin{proof} 
	  We proceed similarly to the proof of Theorem \ref{Silver E}. Let $E\in \Ee$ with associated partition $\{I_n:\; n\in\omega\}$ of $\w$ into intervals, sets of sequences $J_n\se 2^{I_n}$, $n\in\w$. Set
		  \begin{align*}
		    I'_{n}&=I_{3n}\cup I_{3n+1}\cup I_{3n+2},
		    \\
		    A'&=\{a\in A:\; (\forall n\in\w)(a\in I'_n\to \neg (\exists b<a)(b\in I'_n\cap A))\},
		    \\
		    J'_n&=(J_{3n}\times J_{3n+1}\times J_{3n+2})+\{\mbb{0}\rest I'_n, \mbb{1}\rest I'_n, \chi_{A'}\rest I'_n, (\mbb{1}-\chi_{A'})\rest I'_n\}.
		  \end{align*}
		  Set
		  \begin{align*}
		    T&=\{\sigma\in 2^{<\omega}:\; (\forall n\in\w)(\sigma\rest (I'_n\bez A)\se\mbb{0}\rest (I'_n\bez A) \lor \sigma\rest (I'_n\bez A)\se\mbb{1}\rest (I'_n\bez A))\},
		    \\
		    E'&=\{x\in 2^\w:\; (\foralmostall n\in\omega)(x\rest I'_n\in J'_n)\}.
		  \end{align*}
		  The proof that $T$ is splitting essentially was done in Theorem \ref{Splitting null} and the proof of $E'\in \Ee$ and $E+\underbrace{[T]+[T]+\dots+[T]}_{n-times}\se E'$ follows a similar pattern seen in Theorem \ref{Silver E}.
	\end{proof}
		
  \begin{problem}
		Let $F\in \Ee$. Is it true that for every splitting tree $T$ there exists a splitting tree $T'\se T$ such that
		\[
		  F+\underbrace{[T']+[T']+...+[T']}_{\tn{n-times}}\in\Ee?
		\]
	\end{problem}
	\printbibliography

\end{document}